\documentclass[12pt]{amsart}
\usepackage[dvips]{graphicx}
\usepackage{amssymb,amsmath,amsthm}

\textwidth=15cm
\textheight=22cm
\topmargin=0.5cm
\oddsidemargin=0.5cm
\evensidemargin=0.5cm
\pagestyle{plain}

\newtheorem{Theorem}{Theorem}[section]
\newtheorem{Lemma}[Theorem]{Lemma}

\newtheorem{Proposition}[Theorem]{Proposition}
\theoremstyle{definition}

\newtheorem{Conjecture}[Theorem]{Conjecture}

\def\RR{{\mathbb R}}
\def\ZZ{{\mathbb Z}}

\def\CC{{\mathbb C}}

\begin{document}

\title{Roots of the Ehrhart polynomial of hypersimplices}
\author{Hidefumi Ohsugi and Kazuki Shibata}

\address{Hidefumi Ohsugi,
Department of Mathematics,
College of Science,
Rikkyo University,
Toshima-ku, Tokyo 171-8501, Japan.} 
\email{ohsugi@rikkyo.ac.jp}

\address{Kazuki Shibata,
Department of Mathematics,
Graduate School of Science,
Rikkyo University,
Toshima-ku, Tokyo 171-8501, Japan.}
\email{12rc003c@rikkyo.ac.jp}

\date{}

\begin{abstract}
The Ehrhart polynomial of the $d$-th hypersimplex $\Delta(d,n)$ of order $n$ is studied.
By computational experiments and a known result for $d=2$,
we conjecture that the real part of 
every roots of the Ehrhart polynomial of $\Delta(d,n)$
is negative and larger than $- \frac{n}{d}$ if $n \geq 2d$.
In this paper, we show that the conjecture is true when $d=3$ and that
every root $a$ of the Ehrhart polynomial of $\Delta(d,n)$
satisfies
$
-\frac{n}{d} < {\rm Re} (a) < 1
$
if $4 \leq d  \ll n$.
\end{abstract}

\maketitle

\section*{Introduction}

Let ${\mathcal P} \subset \RR^n$ be an integral convex polytope
of dimension $p$.
Recall that an {\em integral} convex polytope is 
a convex polytope all of whose vertices have integer coordinates.
Given an integer $m >0$, we write $i({\mathcal P}, m)$ for the number of
integer points belonging to
$m {\mathcal P} = \{ m \alpha \ | \ \alpha \in {\mathcal P}\}$, that is,
$$
i({\mathcal P}, m) = | m {\mathcal P} \cap \ZZ^n |
\ \ \ \ m= 1,2,\ldots.
$$
It is known that $i({\mathcal P}, m)$ is a polynomial in $m$ of 
degree $p$.
We call $i({\mathcal P}, m)$ the {\em Ehrhart polynomial} of ${\mathcal P}$.
In general, $i({\mathcal P}, 0) = 1$ and the leading coefficient of 
$i({\mathcal P}, m)$ is equal to the normalized volume of ${\mathcal P}$.
In \cite{Stanley}, it was conjectured that
each root $ a \in \CC$ of $i({\mathcal P}, m)$ satisfies 
$- p \leq {\rm Re} (a) \leq p-1$.
However, several counterexamples for  the conjecture are given in 
\cite{Hig, OhSh} recently.
On the other hand, it is known \cite{Bra} that
all the roots of $i({\mathcal P}, m)$ lie inside the disc with center $-\frac{1}{2}$
and radius $p(p-\frac{1}{2})$.

In this paper, we study roots of the Ehrhart polynomial of a hypersimplex.
Let $d$ and $n$ be integers such that $1 \leq d < n$.
The {\em hypersimplex} $\Delta(d,n)$ is a convex polytope in $\RR^n$
which is the convex hull of
$$
\{
{\bf e}_{i_1} + \cdots + {\bf e}_{i_d}
\ | \ 
1 \leq i_1 < \cdots < i_d \leq n
\},
$$
where each ${\bf e}_j$ is the unit coordinate vector of $\RR^n$.
In general, it is known that
\begin{itemize}
\item
The dimension of $\Delta(d,n)$ is $n-1$;
\item
$\Delta(d,n)$ is isomorphic to $\Delta(n-d,n)$.
\end{itemize}
Thus, throughout this paper, we always assume that $d$ and $n$ satisfy the condition
\begin{eqnarray}
2 d \leq n.
\end{eqnarray}
The Ehrhart polynomial $i(\Delta(d,n), m)$ of $\Delta(d,n)$ is given in \cite{Kat}:
$$
i(\Delta(d,n), m) = \sum_{s=0}^{d-1}(-1)^s
         {n \choose s}
         { (d-s) m+n-1 -s
         \choose
          n-1
          }.
$$
Katzman computed the Hilbert polynomial of corresponding semigroup rings and 
it is equal to $i(\Delta(d,n), m)$ since the semigroup ring is normal.
Exactly speaking, the Hilbert polynomial is equal to the {\em normalized} Ehrhart polynomial
if and only if the semigroup ring is normal.
For the sake of completeness, we will later show that  the normalized Ehrhart polynomial
is equal to  the Ehrhart polynomial in this case.

If  $d=1$, then  $i(\Delta(1,n), m)$ is an $(n-1)$-simplex
and
$$
i(\Delta(1,n), m) = 
         { m+n-1
         \choose
          n-1
          }.
$$
Hence, roots of $i(\Delta(1,n), m) $ are
$ -(n-1), -(n-2), \ldots, -2, -1$.
If $d =2$, then
$$
i(\Delta(2,n), m) =
         { 2 m+n-1
         \choose
          n-1
          }
-
n
         { m+n-2
         \choose
          n-1
          }.
$$
In \cite{five}, it is shown that every root $a \in \CC$ of $i(\Delta(2,n), m)$
satisfies
$$
-\frac{n}{2} < {\rm Re} (a) < 0
$$
when $2d = 4 \leq n$.
Computational experiments\footnote{A rough bound was obtained by
Masanori Tajima in his master's thesis (in Japanese).}
suggest the following conjecture:

\begin{Conjecture}
\label{conjecture}
Let $2d \leq n$.
Then, 
every root $a \in \CC$ of $i(\Delta(d,n), m)$
satisfies
$$
-\frac{n}{d} < {\rm Re} (a) < 0.
$$
\end{Conjecture}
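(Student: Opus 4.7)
The plan is to attack the two inequalities ${\rm Re}(a) > -n/d$ and ${\rm Re}(a) < 0$ separately, starting from Katzman's formula. Write $P(m) = i(\Delta(d,n),m)$ and
\[
T_s(m) = (-1)^s \binom{n}{s}\binom{(d-s)m + n-1-s}{n-1},
\]
so that $P(m) = \sum_{s=0}^{d-1} T_s(m)$.

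For the lower bound, I would shift coordinates by setting $m = -n/d + z$. The leading summand becomes $T_0 = \binom{dz-1}{n-1}$, a polynomial in $z$ whose roots are $1/d, 2/d, \ldots, (n-1)/d$; in particular $T_0$ does not vanish in the closed half-plane ${\rm Re}(z) \leq 0$. The task then reduces to bounding $\sum_{s \geq 1} |T_s/T_0|$ strictly below $1$ on that half-plane, which amounts to controlling the products $\prod_{j=0}^{n-2}((d-s)z + c_{s,j})/(dz + c_{0,j})$ termwise. Under the hypothesis $n \geq 2d$ (and more comfortably $n \gg d$) one can hope that the extra combinatorial weight $\binom{n}{s}$ is outweighed by the geometric decay coming from the ratios of linear factors.

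For the upper bound, each individual factor $\binom{(d-s)m+n-1-s}{n-1}$ has all of its zeros in ${\rm Re}(m) < 0$, so no single $T_s$ vanishes on ${\rm Re}(m) \geq 0$; the real difficulty is ruling out cancellation among the $d$ alternating summands. A promising route is to rewrite $P(m)$ as a manifestly nonvanishing object on the right half-plane, for instance as a positive combination of $\binom{m+k}{n-1}$ via the $h^*$-vector of $\Delta(d,n)$, whose entries are known to be positive (Eulerian-type) numbers. When $d=3$ only three summands appear, and after clearing common linear factors the remainder should be essentially quadratic in $m$, with two roots that can be located explicitly as functions of $n$; this ought to dispose of the conjecture in that case.

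The main obstacle is expected to be the upper bound ${\rm Re}(a) < 0$ in general. The paper settles only for the weaker conclusion ${\rm Re}(a) < 1$ in the range $4 \leq d \ll n$, which strongly suggests that naive termwise estimates lose a full unit in the real part and that closing the gap from $1$ to $0$ will require genuinely new combinatorial input exposing the positivity hidden inside the alternating Katzman sum on the closed right half-plane.
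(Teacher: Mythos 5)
You are addressing a statement that the paper itself leaves as a \emph{conjecture}: the authors prove it only for $d=3$ (and for $d\le 2$ by citation), and for $d\ge 4$ they obtain only the weaker bounds $-\frac{n}{d}<{\rm Re}(a)<1$ under $n\gg d$. Your proposal is likewise an outline rather than a proof, so the honest comparison is between your plan and the partial results the paper actually establishes. Your lower-bound strategy is essentially the paper's: they apply Rouch\'e's theorem with $f=f_{n,0}$ and $g=\sum_{s\ge1}(-1)^sf_{n,s}$, and on the line ${\rm Re}(z)=-\frac{n}{d}$ (and beyond it) they bound each ratio $|f_{n,s}(z)|/|f_{n,0}(z)|$ termwise by $\frac{1}{d-1}$, exactly the kind of factor-by-factor control of $\binom{n}{s}\prod_j|((d-s)z+c_{s,j})/(dz+c_{0,j})|$ you describe. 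You leave all of these estimates undone, and they are the entire content of the argument; the paper needs $n\ge d^2+2d$ to close them, not merely $n\ge 2d$.

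Two concrete gaps. First, your route to the upper bound via positivity of the $h^*$-vector cannot work as stated: every lattice polytope has a nonnegative $h^*$-vector (Stanley), yet the paper's own references [Hig, OhSh] exhibit Ehrhart polynomials with roots of large positive real part. The terms $\binom{m+p-k}{p}$ with $k\ge 1$ vanish at the nonnegative integers $0,1,\dots,k-1$, so a nonnegative combination of them is not ``manifestly nonvanishing'' on ${\rm Re}(m)\ge 0$; some input beyond positivity is required, and indeed the paper only reaches ${\rm Re}(a)<1$ for $d\ge4$ by estimating on the line ${\rm Re}(z)=1$, where the shifted factors $(d-s)m+d+n-i-2s$ become controllable. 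Second, your expectation that for $d=3$ ``clearing common linear factors'' leaves an essentially quadratic remainder is false: $i(\Delta(3,n),m)$ has degree $n-1$ and the three summands share no useful common factors. Already for $n=6$ the paper must factor $i(\Delta(3,6),z)=\frac{1}{20}(z+1)\left(11(z+1)^4+5(z+1)^2+4\right)$ and invoke Enestr\"om--Kakeya on the quartic, and for $n\ge 7$ the $d=3$ proof is a full Rouch\'e argument on the rectangle $-\frac{n}{3}<{\rm Re}(z)<0$, $|{\rm Im}(z)|<\sqrt{2}\,n$, with separate estimates on the three boundary pieces and monotonicity lemmas (Lemmas \ref{migi}--\ref{aida}) to reduce to small $n$. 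Your final paragraph correctly identifies that the barrier is the upper bound ${\rm Re}(a)<0$, but the proposal supplies no mechanism for crossing it.
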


In this paper, we show that 

\begin{Theorem}
Let $d$ and $n$ be positive integers, and
let $a \in \CC$ be a root of  $i(\Delta(d,n), m)$.
Then, we have the following:
\begin{enumerate}
\item[(i)]
If $d=3$ and $n \geq 6$, then we have
$
-\frac{n}{3} < {\rm Re} (a) < 0.
$
\item[(ii)]
If $4 \leq d  \ll n$, then we have
$
-\frac{n}{d} < {\rm Re} (a) < 1.
$
\end{enumerate}
\end{Theorem}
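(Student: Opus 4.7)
The plan is to bound the real parts of the complex roots of $f(m) := i(\Delta(d,n),m)$ by proving nonvanishing of $f$ on the candidate boundary lines ${\rm Re}(m) = 0$, ${\rm Re}(m) = 1$, and ${\rm Re}(m) = -n/d$, and then invoking a root-counting argument (continuity, or Rouch\'e's theorem applied against a simpler polynomial whose roots are known) to conclude that all roots lie in the desired strip.

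For part (i), with $d = 3$, write
$$
f(m) \;=\; \binom{3m+n-1}{n-1} - n\binom{2m+n-2}{n-1} + \binom{n}{2}\binom{m+n-3}{n-1}.
$$
To prove ${\rm Re}(a) < 0$, I substitute $m = i\tau$ with $\tau \in \RR$, split $f(i\tau)$ into its real and imaginary parts (each is an explicit polynomial in $\tau$ with coefficients depending polynomially on $n$), and show that these two real polynomials have no common real zero, which reduces to verifiable algebraic inequalities in $n$ and $\tau$. The complementary bound ${\rm Re}(a) > -n/3$ is obtained by the same strategy after the translation $m = -n/3 + w$. Together with Braun's disk bound, which confines all roots to a bounded region, the nonvanishing on both vertical lines forces every root into the strip $-n/3 < {\rm Re}(m) < 0$.

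For part (ii), with $4 \leq d \ll n$, I would argue by perturbation from the dominant summand. The leading binomial $\binom{dm+n-1}{n-1}$ has all $n-1$ of its roots at $-1/d, -2/d, \ldots, -(n-1)/d$, which already lie inside the target strip $(-n/d, 0) \subset (-n/d, 1)$. Each remaining summand $(-1)^s\binom{n}{s}\binom{(d-s)m+n-1-s}{n-1}$ with $1 \leq s \leq d-1$ has top-degree coefficient of order $n^s(d-s)^{n-1}/(n-1)!$, which is of strictly smaller order than the leading $d^{n-1}/(n-1)!$ as $n \to \infty$ with $d$ fixed. Applying Rouch\'e's theorem on the boundary of a large rectangle with vertical sides ${\rm Re}(m) = -n/d$ and ${\rm Re}(m) = 1$ (horizontal sides placed well outside the Braun disk), one would conclude that $f$ and the leading binomial have the same number of zeros inside, giving the claimed bound.

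The main obstacle will be the lower bound ${\rm Re}(a) > -n/d$, which is sharp up to $O(1/d)$: the leftmost root $-(n-1)/d$ of the leading binomial lies only $1/d$ above the threshold, so the perturbation estimates must be tight enough to prevent any of the $n-1$ roots from being pushed past $-n/d$. This is precisely why the hypothesis $n \gg d$ is needed in part (ii). For part (i) the three-term structure of the explicit formula makes a direct verification feasible even for small $n$, and the threshold $n \geq 6$ arises as the smallest case for which the resulting inequalities simultaneously hold (in accordance with the standing assumption $2d \leq n$).
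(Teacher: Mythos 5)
Your overall strategy---compare the alternating sum against the dominant summand $\binom{dm+n-1}{n-1}$, whose $n-1$ roots $-1/d,\dots,-(n-1)/d$ already lie in the target strip, and transfer the root count by Rouch\'e---is the same skeleton the paper uses. But as written the proposal has two genuine gaps.

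First, in part (i) your concluding step is a non sequitur. Showing that the real and imaginary parts of $f(i\tau)$ (resp.\ $f(-n/3+i\tau)$) have no common real zero only proves that $f$ does not vanish \emph{on} the two vertical lines; combined with Braun's disk bound this does not force the roots into the strip between them, since a polynomial can avoid two vertical lines while having roots on the far side of either. To conclude containment you must count roots inside a closed contour, and Rouch\'e requires the strictly stronger inequality $\bigl|{-f_{n,1}(z)}+f_{n,2}(z)\bigr|<|f_{n,0}(z)|$ (in the paper's notation, with $f_{n,s}$ the $s$-th summand cleared of $(n-1)!$) on the \emph{entire} boundary of a bounded region---including the horizontal segments that close off the rectangle, which your proposal never addresses. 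The paper proves $|f_{n,1}(z)|+|f_{n,2}(z)|<|f_{n,0}(z)|$ on a rectangle with horizontal sides at ${\rm Im}(z)=\pm\sqrt2\,n$, reduces the two vertical sides to the base cases $n=7,8,9$ via monotonicity-in-$n$ lemmas, and treats $n=6$ entirely separately by writing $i(\Delta(3,6),z)$ as $\frac{1}{20}(z+1)\bigl(11(z+1)^4+5(z+1)^2+4\bigr)$ and applying Enestr\"om--Kakeya; your claim that $n\ge6$ is simply ``the smallest case for which the inequalities hold'' does not match this.

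Second, part (ii) is a statement of intent rather than a proof: dominance of leading coefficients ($n^s(d-s)^{n-1}$ versus $d^{n-1}$) says nothing about the pointwise comparison on the critical line ${\rm Re}(m)=-n/d$, which is exactly where the argument is delicate and where all the work lies. The paper in fact needs no rectangle for this part: it shows directly that $\sum_{s\ge1}|f_{n,s}(z)|/|f_{n,0}(z)|<1$ on the closed half-planes ${\rm Re}(z)\ge1$ and ${\rm Re}(z)\le-n/d$ (so the polynomial simply cannot vanish there), via a factor-by-factor analysis of $\prod_k\bigl|\bigl((d-s)m-\tfrac{ns}{d}+k+s\bigr)/(dm+k)\bigr|$ split into three ranges of $k$, followed by a convexity-in-$s$ argument to verify the resulting bound $\frac{1}{d-1}$ at $s=1$ and $s=d-2$; this is where the explicit thresholds $n\ge6d^2-16d+13$ and $n\ge d^2+2d$ come from. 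None of these estimates, nor any quantitative meaning of ``$d\ll n$,'' appears in your proposal.
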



\section{Fundamental facts on $i(\Delta(d,n), m)$}

In this section, we present fundamental facts on $i(\Delta(d,n), m)$
which will later play an important role.
First we confirm that
the Ehrhart polynomial of  the hypersimplex 
$\Delta(d,n)$ is
\begin{eqnarray}
\label{ehrhart}
i(\Delta(d,n),m) = \sum_{s=0}^{d-1}(-1)^s
         {n \choose s}
         { (d-s) m+n-1 -s
         \choose
          n-1
          }.
\end{eqnarray}
It is pointed out in \cite[Remark 2.3]{Kat} that the right-hand side of (\ref{ehrhart})
is equal to the {\it normalized} Ehrhart polynomial of $\Delta(d,n)$.
In other words,
$$
| m  \Delta(d,n) \cap \ZZ A |
 = \sum_{s=0}^{d-1}(-1)^s
         {n \choose s}
         { (d-s) m+n-1 -s
         \choose
          n-1
          },
$$
where $A =\{
{\bf e}_{i_1} + \cdots + {\bf e}_{i_d}
\ | \ 
1 \leq i_1 < \cdots < i_d \leq n
\}$.
Note that $\ZZ A \neq \ZZ^n$.
Since the following fact is not stated in \cite{Kat},
we show it for the sake of completeness:

\begin{Proposition}
The Ehrhart polynomial of $\Delta(d,n)$
is equal to 
the normalized Ehrhart polynomial of $\Delta(d,n)$.
\end{Proposition}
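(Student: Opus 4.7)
The plan is to prove the proposition by showing that the sublattice $\ZZ A \subseteq \ZZ^n$ coincides with $L := \{x \in \ZZ^n : x_1 + \cdots + x_n \in d\ZZ\}$, and that every lattice point of $m\Delta(d,n)$ already lies in $L$. Since $\ZZ A \subseteq \ZZ^n$, the inclusion $m\Delta(d,n) \cap \ZZ A \subseteq m\Delta(d,n) \cap \ZZ^n$ is automatic, so the content of the proposition is the reverse inclusion.

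First, I would observe that $\Delta(d,n)$ lies in the affine hyperplane $\{x : x_1 + \cdots + x_n = d\}$, so $m\Delta(d,n) \subset \{x : \sum x_i = md\}$. In particular every integer point of $m\Delta(d,n)$ has coordinate sum equal to $md$, which is a multiple of $d$, so it automatically belongs to $L$. Hence it suffices to show $L \subseteq \ZZ A$.

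The next step is to produce enough explicit elements of $\ZZ A$ to generate $L$. The inclusion $\ZZ A \subseteq L$ is clear since each element of $A$ has coordinate sum $d$. For the reverse, I would use the standing hypothesis $n \geq 2d$ (only $n \geq d+1$ is actually needed here) to show that $e_i - e_j \in \ZZ A$ for every $i \neq j$: choose any $(d-1)$-subset $S \subseteq \{1,\ldots,n\} \setminus \{i,j\}$, which exists because $n-2 \geq d-1$, and write
\[
e_i - e_j = \Bigl(e_i + \sum_{k \in S} e_k\Bigr) - \Bigl(e_j + \sum_{k \in S} e_k\Bigr),
\]
a difference of two elements of $A$. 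Combining this with $e_1 + \cdots + e_d \in A$, any $x \in L$ can be written as $x = k(e_1 + \cdots + e_d) + y$ where $k = (\sum x_i)/d \in \ZZ$ and $y$ is an integer vector with coordinate sum zero; such a $y$ is a $\ZZ$-linear combination of the differences $e_i - e_j$, so $x \in \ZZ A$.

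I do not anticipate any serious obstacle: the argument is an elementary exercise in lattice generation. The only point that requires a little care is to resist the temptation to claim $\ZZ A = \ZZ^n$, which is false ($\ZZ A$ has index $d$ in $\ZZ^n$); what is true, and what is being used, is the weaker statement that $\ZZ A$ contains every integer point whose coordinate sum is divisible by $d$, and this is exactly the set of integer points that could possibly lie in some dilate of $\Delta(d,n)$.
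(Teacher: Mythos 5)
Your argument is correct and is essentially the paper's own proof: both establish $e_i - e_j \in \ZZ A$ by differencing two $d$-subsets sharing $d-1$ elements, and then decompose a lattice point of $m\Delta(d,n)$ (whose coordinate sum $md$ is divisible by $d$) as an integer multiple of one fixed element of $\ZZ A$ of coordinate sum divisible by $d$ plus a combination of these differences. Your explicit identification of $\ZZ A$ with the index-$d$ sublattice $\{x : \sum x_i \in d\ZZ\}$ is a slightly tidier packaging of the same idea, not a different route.
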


\begin{proof}
In general, we have $m  \Delta(d,n) \cap \ZZ A \subset m  \Delta(d,n) \cap \ZZ^n$.
Hence, it is enough to show that 
$m  \Delta(d,n) \cap \ZZ A \supset m  \Delta(d,n) \cap \ZZ^n$.
Let $\alpha =( \alpha_1,\ldots, \alpha_n) \in m  \Delta(d,n) \cap \ZZ^n $.
Since $\alpha \in m  \Delta(d,n) $, 
we have $\alpha_1+ \cdots + \alpha_n = d m$.
Remark that 
$${\bf e}_1 - {\bf e}_2 = ({\bf e}_1 + {\bf e}_3 + \cdots + {\bf e}_{d+1})
- ({\bf e}_2 + {\bf e}_3 + \cdots + {\bf e}_{d+1}) \in \ZZ A.$$
Similarly, 
${\bf e}_1 - {\bf e}_j $ belongs to $\ZZ A$ for each $2 \leq j \leq n$.
Hence,
$$
d {\bf e}_1 =  ({\bf e}_1 + {\bf e}_2 + \cdots + {\bf e}_d)
+ \sum_{j=2}^d ({\bf e}_1 - {\bf e}_j ) \in \ZZ A.
$$
Thus,
$$
\alpha = 
\left(
\sum_{j=1}^n \alpha_j
\right)
 {\bf e}_1 -
\sum_{j=2}^n \alpha_j ({\bf e}_1 - {\bf e}_j )
=
d m \ 
 {\bf e}_1 -
\sum_{j=2}^n \alpha_j ({\bf e}_1 - {\bf e}_j )
$$
belongs to $\ZZ A$,
as desired.
\end{proof}

In order to study roots of $ i(\Delta(d,n),m)$, we will use the following 
fact:

\begin{Proposition}[Rouch\'e]
Let $D$ be a simply connected region
and let $f$ and $g$ be holomorphic functions in $\overline{D}$.
If $|f(z)| > |g (z)|$ holds for every $z \in \partial D $, then
$f$ and $f + g$ have the same number of zeros in $D$,
where each zero is counted as many times as its multiplicity.
\end{Proposition}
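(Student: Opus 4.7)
The plan is to reduce the statement to the classical argument principle via the auxiliary meromorphic function $h(z) := 1 + g(z)/f(z) = (f(z)+g(z))/f(z)$. First I would check that neither $f$ nor $f+g$ vanishes on $\partial D$: the hypothesis gives $f(z) \neq 0$ directly, and then $|f(z)+g(z)| \geq |f(z)| - |g(z)| > 0$ gives $(f+g)(z) \neq 0$ as well. Consequently $h$ is meromorphic in a neighborhood of $\overline{D}$ with no zero or pole on $\partial D$, so the argument principle applies and, writing $N(f)$ and $N(f+g)$ for the numbers of zeros of $f$ and $f+g$ inside $D$ counted with multiplicity, it yields
\[
N(f) = \frac{1}{2\pi i}\oint_{\partial D} \frac{f'(z)}{f(z)}\, dz,
\qquad
N(f+g) = \frac{1}{2\pi i}\oint_{\partial D} \frac{(f+g)'(z)}{(f+g)(z)}\, dz.
\]
Subtracting these and using the logarithmic-derivative identity $h'/h = (f+g)'/(f+g) - f'/f$, the desired equality $N(f+g) = N(f)$ reduces to showing
\[
\oint_{\partial D} \frac{h'(z)}{h(z)}\, dz = 0.
\]

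The geometric heart of the argument is then a short observation about the image curve $h(\partial D)$. The strict inequality $|g|<|f|$ on $\partial D$ translates to $|h(z) - 1| = |g(z)|/|f(z)| < 1$ for $z \in \partial D$, so $h(\partial D)$ is contained in the open disk $\{w \in \CC : |w-1|<1\}$, which does not meet the origin. In particular $h(\partial D)$ has winding number $0$ about $0$; equivalently, a single-valued holomorphic branch of $\log h$ exists on a neighborhood of $\partial D$, from which the vanishing of $\oint_{\partial D} h'/h\, dz$ is immediate. Combined with the previous display this yields $N(f+g) = N(f)$.

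The only real obstacle is a technicality rather than a conceptual step. The argument principle is customarily proved for regions bounded by a piecewise smooth Jordan curve (or a finite sum of such), whereas ``simply connected region'' permits wilder boundaries. I would circumvent this by approximating $\partial D$ from inside by a smooth Jordan curve $\gamma$ on which $|f|>|g|$ still holds, which is possible by continuity and compactness of $\partial D$ together with the strictness of the inequality. The three steps above then run verbatim on $\gamma$, and since $f$ and $f+g$ have only finitely many zeros inside $D$, a sufficiently close approximation captures all of them, giving the theorem in the stated generality.
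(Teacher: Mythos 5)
The paper offers no proof of this Proposition: it is quoted as a classical fact (Rouch\'e's theorem) to be applied later, so there is nothing to compare your argument against. Your proof is the standard one --- reduce to showing $\oint_{\partial D} h'/h\,dz=0$ for $h=(f+g)/f$ via the argument principle, and get that from $|h-1|<1$ on $\partial D$, which forces winding number $0$ about the origin --- and it is correct, including the preliminary check that neither $f$ nor $f+g$ vanishes on $\partial D$. The boundary-regularity caveat you raise is handled adequately by your inner-approximation remark, and is in any case moot for this paper, where $D$ is always a rectangle.
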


Fix an integer $d >0$.
For $s= 0,1,\ldots,d-1$,
let
$$
f_{n,s} (m)  =
{n \choose s}  
((d-s) m+n-1 -s) \cdots ((d-s) m+ 1 -s).
$$
Then
$$
i(\Delta(d,n),m) =
\frac{1}{(n-1)!}
 \sum_{s=0}^{d-1}(-1)^s f_{n,s} (m).
$$
We will apply Rouch\'e Theorem by considering
the functions
$
f(z) = f_{n,0} (z)
$
and
$$
g(z) = \sum_{s=1}^{d-1}(-1)^s f_{n,s} (z).
$$
Let 
$
\varphi_{n,d,s} (z)
=
\frac{
|f_{n,s} (z)|
}
{
|f_{n,0} (z)|
}
$.

\begin{Lemma}
\label{migi}
For every $z = \beta \sqrt{-1}$ with $\beta \in \RR$, 
we have 
$
\varphi_{n+1,d,s} (z)
<
\varphi_{n,d,s} (z)
$.
\end{Lemma}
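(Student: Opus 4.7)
The plan is to exploit the product structure of $f_{n,s}$ and reduce the lemma to a comparison of a single linear factor. Observe that
$$
f_{n,s}(m) = \binom{n}{s}\prod_{k=1-s}^{n-1-s}\bigl((d-s)m + k\bigr)
$$
is a product of $n-1$ linear factors, so replacing $n$ by $n+1$ appends exactly one new factor and multiplies the binomial coefficient by $\binom{n+1}{s}/\binom{n}{s} = \frac{n+1}{n+1-s}$. A direct computation then gives
$$
\frac{f_{n+1,s}(m)}{f_{n,s}(m)} \;=\; \frac{n+1}{n+1-s}\,\bigl((d-s)m + (n-s)\bigr).
$$
Specializing to $s=0$ (where the binomial ratio is $1$) and dividing, I would obtain
$$
\frac{\varphi_{n+1,d,s}(z)}{\varphi_{n,d,s}(z)} \;=\; \frac{n+1}{n+1-s}\cdot \frac{|(d-s)z + (n-s)|}{|dz + n|}.
$$

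Next, substituting $z = \beta\sqrt{-1}$ and squaring both sides, the lemma reduces to the real inequality
$$
\left(\frac{n+1}{n+1-s}\right)^{2}\bigl((n-s)^{2} + (d-s)^{2}\beta^{2}\bigr) \;<\; n^{2} + d^{2}\beta^{2}.
$$
Because this is linear in $\beta^{2}$, it is enough to verify two term-by-term inequalities: $\frac{n+1}{n+1-s}(n-s) < n$, which simplifies to $s>0$, and $\frac{n+1}{n+1-s}(d-s) < d$, which simplifies to $s(d-n-1) < 0$. Both hold for $1 \leq s \leq d-1$, using only $d < n$; the standing hypothesis $2d \leq n$ is in fact much stronger than needed.

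The argument is essentially careful bookkeeping of one added linear factor and one binomial ratio. The main ``obstacle'' is simply confirming the implicit range of $s$: for $s=0$ both $\varphi_{n+1,d,0}(z)$ and $\varphi_{n,d,0}(z)$ equal $1$, so the lemma is only meaningful for $1 \leq s \leq d-1$, which is precisely the range in which the two coefficient inequalities above become strict. Avoiding sign errors when expanding $(n+1)(n-s) - n(n+1-s)$ and $(n+1)(d-s) - d(n+1-s)$ is the only place where one has to be slightly careful.
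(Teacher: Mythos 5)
Your proposal is correct and follows essentially the same route as the paper: both reduce $\varphi_{n+1,d,s}(z)/\varphi_{n,d,s}(z)$ to the single added factor $\frac{n+1}{n+1-s}\cdot\frac{|(d-s)z+(n-s)|}{|dz+n|}$ and then verify the resulting inequality in $\beta^2$. The only cosmetic difference is that the paper expands and factors the difference of the two quadratics in $\beta$ as a whole, whereas you compare the coefficient of $\beta^2$ and the constant term separately; your observation that the statement requires $1\leq s\leq d-1$ is also implicit in the paper's computation.
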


\begin{proof}
Since
$$
\varphi_{n,d,s} (\beta \sqrt{-1})\\
=
{n \choose s}  
\sqrt{
\frac{
((d-s)^2 \beta^2 + (n-1 -s)^2) \cdots ((d-s)^2 \beta^2 + (1 -s)^2) 
}
{
(d^2 \beta^2 + (n-1)^2) \cdots (d^2 \beta^2 + 1)
}
}
$$
holds, we have
\begin{eqnarray*}
\frac{\varphi_{n+1,d,s} (z)}{\varphi_{n,d,s} (z)}
&=&
\sqrt{
\frac{(n+1)^2 (d-s)^2 \beta^2 + (n+1)^2(n -s)^2 }{(n+1-s)^2 d^2 \beta^2 + (n+1-s)^2 n^2}
}.
\end{eqnarray*}
Moreover, since
\begin{eqnarray*}
 & &
((n+1-s)^2 d^2 \beta^2 + (n+1-s)^2 n^2)
-
((n+1)^2 (d-s)^2 \beta^2 + (n+1)^2(n -s)^2)\\
& = &
\beta^2 ((n+1) (d-s) + (n+1-s) d) (n-d+1) s
+
(  (n+1) (n -s)  + (n+1-s)  n) s\\
&>& 0
\end{eqnarray*}
we have 
$
\frac{\varphi_{n+1,d,s} (z)}{\varphi_{n,d,s} (z)}
<1
$, as desired.
\end{proof}

\begin{Lemma}
\label{hidari}
Suppose $n \geq d^2 -2$.
Then, for every $z =  - \frac{n}{d} - \beta \sqrt{-1}$ with $\beta \in \RR$, 
we have 
$
\varphi_{n+d,d,s} (z)
<
\varphi_{n,d,s} (z)
$.
\end{Lemma}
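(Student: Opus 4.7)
The plan is to mirror the computation in Lemma~\ref{migi}. First I would substitute $z = -\frac{n}{d} - \beta\sqrt{-1}$ into the defining product for $\varphi_{n,d,s}(z)^{2}$. Using $|(d-s)z + k - s|^{2} = ((k-n) + s(n-d)/d)^{2} + (d-s)^{2}\beta^{2}$ and $|dz + k|^{2} = (k-n)^{2} + d^{2}\beta^{2}$, one obtains the explicit formula
\[
\varphi_{n,d,s}(z)^{2} \;=\; \binom{n}{s}^{\!2} \prod_{k=1}^{n-1}\frac{((k-n) + s(n-d)/d)^{2} + (d-s)^{2}\beta^{2}}{(k-n)^{2} + d^{2}\beta^{2}}.
\]

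Next I would form the ratio $\varphi_{n+d,d,s}(z)^{2}/\varphi_{n,d,s}(z)^{2}$. The factors indexed by $k = 1, \ldots, n-1$ cancel between numerator and denominator, leaving only the $d$ extra factors corresponding to $k = n, n+1, \ldots, n+d-1$ together with the binomial quotient. Writing $j = k - n$ and $c = s(n-d)/d$, this simplifies to
\[
\frac{\varphi_{n+d,d,s}(z)^{2}}{\varphi_{n,d,s}(z)^{2}} \;=\; \left(\frac{\binom{n+d}{s}}{\binom{n}{s}}\right)^{\!2} \prod_{j=0}^{d-1}\frac{(j + c)^{2} + (d-s)^{2}\beta^{2}}{j^{2} + d^{2}\beta^{2}}.
\]

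The remaining task is to show this rational function of $x := \beta^{2}$ is strictly less than $1$. Writing it as $P(x)/Q(x)$ with $P, Q$ polynomials in $x$ of degree $d$, the goal is the polynomial inequality $Q(x) - P(x) > 0$ for all $x \ge 0$, which it suffices to establish coefficient by coefficient. The $x^{d}$ (i.e.\ leading) comparison reduces to the routine bound $\binom{n+d}{s}^{2}(d-s)^{2d} < \binom{n}{s}^{2}d^{2d}$. For the intermediate and low-order coefficients a natural device is to pair each of the $d$ factors of $\binom{n+d}{s}/\binom{n}{s} = \prod_{i=1}^{d}(n+i)/(n-s+i)$ with one factor of the product above, thereby converting the single global inequality into $d$ local ones that can be checked individually and recombined.

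The main obstacle will be pinning down the precise threshold $n \ge d^{2}-2$. I expect it to emerge from the low-order coefficient comparisons---in particular the $j=0$ term, whose denominator $d^{2}x$ has no constant part and therefore forces especially delicate bookkeeping---and from quantifying the competition between the binomial growth, roughly $(1 + d/n)^{s}$, and the geometric decay $((d-s)/d)^{2d}$ coming from the $d$ new factors. Tracking these two opposing contributions uniformly in $\beta$ is the technical heart of the lemma, and the bound $n \ge d^{2}-2$ should appear as the smallest $n$ for which the decay wins across every coefficient.
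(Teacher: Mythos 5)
There is a genuine gap, and it sits exactly where you reduced the ratio to a single block of $d$ extra factors. Your cancellation implicitly evaluates $\varphi_{n+d,d,s}$ at the \emph{same} point $z=-\frac{n}{d}-\beta\sqrt{-1}$ as $\varphi_{n,d,s}$; that is the only reading under which the factors indexed by $k=1,\dots,n-1$ cancel simultaneously in numerator and denominator and leave $\prod_{j=0}^{d-1}\bigl((j+c)^2+(d-s)^2\beta^2\bigr)/\bigl(j^2+d^2\beta^2\bigr)$. But under that reading the lemma is false: $f_{n+d,0}$ has $-\frac{n}{d}$ as a root (the factor $dz+n$), so your $j=0$ denominator is $d^2\beta^2$ and the ratio blows up as $\beta\to 0$; correspondingly your target inequality $Q(x)-P(x)>0$ fails at $x=0$ because $Q(0)=0<P(0)$. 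This is not the ``delicate bookkeeping'' you anticipated --- it shows the set-up cannot work. The lemma is used in Case 2 of the $d=3$ theorem to step along the \emph{left edge of the rectangle for each $n$}, so $\varphi_{n+d,d,s}$ must be evaluated at $-\frac{n+d}{d}-\beta\sqrt{-1}$, and that is what the paper's displayed formulas actually compute. With the shifted point the cancellation is no longer clean: the numerator factors of $\varphi_{n+d,d,s}$ match those of $\varphi_{n,d,s}$ only after an index shift by $d-s$, while the denominator factors match after a shift by $d$. The leftover numerator factors therefore split into two groups at opposite ends of the range, $(d-s)^2\beta^2+(k-\frac{sn}{d})^2$ for $k=1,\dots,s$ and $(d-s)^2\beta^2+(n+j-\frac{sn}{d})^2$ for $j=s,\dots,d-1$, set against the denominator block $d^2\beta^2+i^2$, $i=n,\dots,n+d-1$, which is bounded away from zero --- so no singularity arises.

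From there the paper does not compare coefficients of two degree-$d$ polynomials. It bounds the second group termwise by $1$ (since $(d-s)^2\le d^2$ and $0<n+j-\frac{sn}{d}\le n+j$), bounds each factor of the first group by $\bigl(\frac{d-1}{d}\bigr)^2$ via the elementary inequality $\frac{n+k-1}{d}>\frac{|k-\frac{sn}{d}|}{d-1}$, and then pairs each of the $s$ factors of $\binom{n+d}{s}/\binom{n}{s}=\prod_{k=1}^{s}\frac{n+d+1-k}{n+1-k}$ with one factor $\frac{d-1}{d}$; the product of each pair is $\le 1$ precisely when $n\ge d^2-2$, which is where the hypothesis enters --- not through the constant term of your $Q-P$. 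Your instinct in the final paragraph (play the binomial growth off against the $\bigl(\frac{d-1}{d}\bigr)^s$ decay by pairing factors) is the right one, but to make it work you must first redo the ratio at the shifted evaluation point and identify the two-sided set of leftover factors correctly.
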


\begin{proof}
Since $\varphi_{n,d,s} (z)$ is equal to
$$
{n \choose s}  
\sqrt{
\frac{
((d-s)^2 \beta^2 + (-1 -s+\frac{ s n}{d})^2) \cdots ((d-s)^2 \beta^2 + (-n+1 -s+\frac{s n}{d})^2)
}
{
(d^2 \beta^2 + 1) \cdots (d^2 \beta^2 + (n-1)^2)
}
},
$$
we have
\begin{eqnarray*}
& &
\frac{\varphi_{n+d,d,s} (z)}{\varphi_{n,d,s} (z)}\\
&=&
\frac{{n+d \choose s}  }{ {n \choose s}  }
\sqrt{
\frac{
((d-s)^2 \beta^2 + (1 -\frac{ s n}{d})^2) \cdots ((d-s)^2 \beta^2 + (s-\frac{s n}{d})^2)
}
{
(d^2 \beta^2 + n^2) \cdots (d^2 \beta^2 + (n+s-1)^2)
}
}
\\
& &
\times 
\sqrt{
\frac{
((d-s)^2 \beta^2 + (n+s -\frac{ s n}{d})^2) \cdots ((d-s)^2 \beta^2 + (n+d-1-\frac{s n}{d})^2)
}
{
(d^2 \beta^2 + (n+s)^2) \cdots (d^2 \beta^2 + (n+d-1)^2)
}
}
\\
& < &
\frac{
(n+d)(n+d-1) \cdots (n+d+1-s)
}{
n(n-1) \cdots (n+1-s)
}
\\
& &
\times
\sqrt{
\frac{
((d-s)^2 \beta^2 + (1 -\frac{ s n}{d})^2) \cdots ((d-s)^2 \beta^2 + (s-\frac{s n}{d})^2)
}
{
(d^2 \beta^2 + n^2) \cdots (d^2 \beta^2 + (n+s-1)^2)
}
}.
\end{eqnarray*}
For $1 \leq k \leq s$ $(\leq d-1)$,
\begin{eqnarray*}
\frac{
((d-s)^2 \beta^2 + (k -\frac{ s n}{d})^2) 
}{
(d^2 \beta^2 + (n+k-1)^2)
}
&=&
\left(
\frac{d-1}{d}
\right)^2
\frac{
((\frac{d-s}{d-1})^2 \beta^2 + (\frac{k -\frac{ s n}{d}}{d-1})^2) 
}{
(\beta^2 + (\frac{n+k-1}{d})^2)
}
\end{eqnarray*}
and
\begin{eqnarray*}
\frac{|n+k-1|}{d}- \frac{|k -\frac{ s n}{d}|}{d-1}
&=&
\frac{(n+k-1)(d-1) +dk - sn}{d(d-1)}\\
&=&
\frac{n(d - 1 - s) + dk +(d- 1)(k-1)}{d(d-1)}
> 0.
\end{eqnarray*}
Hence, we have
\begin{eqnarray*}
\sqrt{
\frac{
(d-s)^2 \beta^2 + (k -\frac{ s n}{d})^2
}{
d^2 \beta^2 + (n+1-k)^2
}
}
& < &
\frac{d-1}{d}.
\end{eqnarray*}
Thus,
\begin{eqnarray*}
\frac{\varphi_{n+d,d,s} (z)}{\varphi_{n,d,s} (z)}
& < &
\frac{
(n+d)(n+d-1) \cdots (n+d+1-s)
}{
n(n-1) \cdots (n+1-s)
}
\left(
\frac{d-1}{d}
\right)^s.
\end{eqnarray*}
Moreover, for $1 \leq k \leq s$ $(\leq d-1)$,
$$
(n+1-k) d - (n+d+1-k) (d-1)
=
n-(d^2-2)+(d-1-k) \geq 0
$$
and hence
$$
\frac{n+d+1-k}{n+1-k}
\cdot
\frac{d-1}{d}
\leq 1.
$$
Therefore, we have
$ 
\frac{\varphi_{n+d,d,s} (z)}{\varphi_{n,d,s} (z)} < 1
$, as desired.
\end{proof}

\begin{Lemma}
\label{aida}
For every $z =   - \alpha + \lambda n \sqrt{-1}$ with $0 \leq \alpha \leq \frac{n}{d}$ and $\lambda \in \RR$, 
we have 
$$\varphi_{n,d,s} (z) <  {n \choose s} \left( \frac{(d-s)^2 + \frac{1}{\lambda^2} }{d^2} \right)^\frac{n-1}{2}.$$
\end{Lemma}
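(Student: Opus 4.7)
My plan is to factor each $f_{n,s}$ into $n-1$ linear pieces and compare the numerator and denominator of $\varphi_{n,d,s}(z) = |f_{n,s}(z)|/|f_{n,0}(z)|$ factor by factor. Writing
$$f_{n,s}(m) = {n \choose s}\prod_{k=1}^{n-1}((d-s)m + k - s),$$
the substitution $z = -\alpha + \lambda n \sqrt{-1}$ yields
$$|(d-s)z + k - s|^2 = ((k-s) - (d-s)\alpha)^2 + (d-s)^2\lambda^2 n^2, \qquad |dz + k|^2 = (k - d\alpha)^2 + d^2\lambda^2 n^2 \geq d^2\lambda^2 n^2,$$
the last inequality being the trivial lower bound obtained by discarding the real part.

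The crux of the proof will be the real estimate
$$((k-s) - (d-s)\alpha)^2 < n^2$$
for every $\alpha \in [0, n/d]$, $k \in \{1, \ldots, n-1\}$ and $s \in \{0, \ldots, d-1\}$. Since $\alpha \mapsto (k-s) - (d-s)\alpha$ is affine, its absolute value on $[0, n/d]$ is attained at one of the endpoints. At $\alpha = 0$ one has $|k - s| \leq n - 1$. At $\alpha = n/d$ the quantity equals $(n - k) - s(n-d)/d$, whose absolute value I would bound by $\max(n - 1,\, n - d - n/d)$ by separately considering the two possible signs; both candidates are strictly less than $n$ under the standing hypothesis $2d \leq n$.

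Granting the real estimate, each numerator factor obeys
$$|(d-s)z + k - s|^2 < n^2 + (d-s)^2\lambda^2 n^2 = n^2((d-s)^2\lambda^2 + 1),$$
so combining with $|dz + k| \geq d |\lambda| n$ and taking products over $k = 1, \ldots, n-1$ gives
$$\varphi_{n,d,s}(z) < {n \choose s}\left(\frac{n\sqrt{(d-s)^2\lambda^2 + 1}}{d|\lambda|n}\right)^{n-1} = {n \choose s}\left(\frac{(d-s)^2 + 1/\lambda^2}{d^2}\right)^{(n-1)/2},$$
which is exactly the claim.

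The only step requiring any genuine estimate is the boundary check of the linear function in the second paragraph; everything else is formal algebraic manipulation. I would also be careful about the degenerate cases: the statement for $s = 0$ is automatic because $\varphi_{n,d,0}(z) = 1$ is strictly less than $(1 + 1/(d^2\lambda^2))^{(n-1)/2}$ whenever $\lambda \neq 0$, and the inequality is vacuous when $\lambda = 0$.
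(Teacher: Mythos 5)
Your proof is correct and follows essentially the same route as the paper's: both factor $\varphi_{n,d,s}$ into $n-1$ linear ratios, bound the real part of each numerator factor by $n$ in absolute value (via the values of the affine function $\alpha \mapsto k-s-(d-s)\alpha$ at the endpoints of $[0,\tfrac{n}{d}]$), and bound each denominator factor below by $d^2\lambda^2 n^2$ after discarding its real part. The only difference is cosmetic: the paper establishes $(k-s-(d-s)\alpha)^2 < n^2$ with a single chain of inequalities rather than your sign split at $\alpha = \tfrac{n}{d}$.
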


\begin{proof}
Note that
$$
\varphi_{n,d,s} (z) 
=
{n \choose s}  
\sqrt{
\prod_{k=1}^{n-1}
\frac{
(d-s)^2 \lambda^2 n^2 + (k -s- (d-s) \alpha )^2
}
{
d^2\lambda^2 n^2 + (k - d \alpha )^2 
}
}.
$$
For $1 \leq k \leq n-1$ and $1 \leq s \leq d-1$, we have
$$
-n
<
- n +s \left(\frac{n}{d}-1\right)+1 =
1-s - (d-s) \frac{n}{d}
<
k -s- (d-s) \alpha
< n-1 -s < n.
$$
Hence,
$(k -s- (d-s) \alpha )^2 < n^2$.
Thus,
$$
\varphi_{n,d,s} (z)
< 
{n \choose s}  
\left(
\frac{
(d-s)^2 \lambda^2 n^2 + n^2
}
{
d^2\lambda^2 n^2
}
\right)^{\frac{n-1}{2}}
=
 {n \choose s} \left( \frac{(d-s)^2 + \frac{1}{\lambda^2} }{d^2} \right)^\frac{n-1}{2},
$$
as desired.
\end{proof}
\section{The case of $d=3$}

In this section, we prove that Conjecture \ref{conjecture} is true if $d =3$.

\begin{Theorem}
Let $d=3$ and $n \geq 6 \ (=2d)$.
Then, 
every root $a \in \CC$ of $i(\Delta(3,n), m)$
satisfies
$$
-\frac{n}{3} < {\rm Re} (a) < 0.
$$
\end{Theorem}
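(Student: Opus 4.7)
The strategy is to apply Rouch\'e Theorem with $f(z) = f_{n,0}(z)$ and $g(z) = -f_{n,1}(z) + f_{n,2}(z)$ on the rectangle
$$
D = \left\{ z \in \CC : -\tfrac{n}{3} \leq {\rm Re}(z) \leq 0,\ |{\rm Im}(z)| \leq M n \right\},
$$
where $M$ is a constant to be chosen sufficiently large. The polynomial $f$ has exactly the $n-1$ simple zeros $-\tfrac{1}{3}, -\tfrac{2}{3}, \ldots, -\tfrac{n-1}{3}$, all strictly inside $D$, while $f+g = (n-1)!\, i(\Delta(3,n), m)$ is itself of degree $n-1$. Once we verify $|g(z)| < |f(z)|$ on $\partial D$---equivalently, $\varphi_{n,3,1}(z) + \varphi_{n,3,2}(z) < 1$---Rouch\'e yields that all $n-1$ roots of $i(\Delta(3,n), m)$ lie in the open interior of $D$, giving the desired strict inequalities.

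The boundary $\partial D$ decomposes into four pieces, each handled by one of the preparatory lemmas of Section~1. On the \emph{top and bottom sides} $z = -\alpha \pm M n \sqrt{-1}$ with $0 \leq \alpha \leq \tfrac{n}{3}$, Lemma~\ref{aida} gives $\varphi_{n,3,s}(z) < \binom{n}{s}\left(\frac{(3-s)^2 + 1/M^2}{9}\right)^{(n-1)/2}$; taking $M = 3$, the sum over $s=1,2$ equals $n \bigl(\tfrac{37}{81}\bigr)^{(n-1)/2} + \binom{n}{2}\bigl(\tfrac{10}{81}\bigr)^{(n-1)/2}$, which one verifies is strictly less than $1$ by direct computation at $n=6$ and the observation that both summands are monotonically decreasing in $n$ (their ratios $\tfrac{n+1}{n}\sqrt{37/81}$ and $\tfrac{n+1}{n-1}\sqrt{10/81}$ are less than $1$ for $n \geq 3$).

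On the \emph{right side} $z = \beta \sqrt{-1}$, $\beta \in \RR$, Lemma~\ref{migi} makes $n \mapsto \varphi_{n,3,s}(\beta \sqrt{-1})$ strictly decreasing, so it suffices to verify the single base case $\varphi_{6,3,1}(\beta\sqrt{-1}) + \varphi_{6,3,2}(\beta\sqrt{-1}) < 1$ for all $\beta \in \RR$. On the \emph{left side} $z = -\tfrac{n}{3} - \beta \sqrt{-1}$, $\beta \in \RR$, Lemma~\ref{hidari} applies once $n \geq 7$ and gives $\varphi_{n+3,3,s}(z) < \varphi_{n,3,s}(z)$. Partitioning $\{n \geq 7\}$ into the three residue classes modulo $3$, the inequality propagates from base cases $n = 7, 8, 9$; combined with a separate check at $n = 6$, the problem reduces to verifying
$$
\varphi_{n,3,1}\!\left(-\tfrac{n}{3} - \beta\sqrt{-1}\right) + \varphi_{n,3,2}\!\left(-\tfrac{n}{3} - \beta\sqrt{-1}\right) < 1
$$
for $n \in \{6, 7, 8, 9\}$ and every $\beta \in \RR$.

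The main obstacle is the verification of these five explicit base-case inequalities in the single real variable $\beta$. After squaring and clearing common denominators, each becomes the positivity of an explicit polynomial in $\beta^2$ of moderate degree, which can be confirmed by locating the relevant critical points. The three monotonicity lemmas of Section~1 reduce the infinite family $n \geq 6$ precisely to these finitely many one-variable checks, but they do not eliminate them.
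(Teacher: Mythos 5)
Your overall architecture coincides with the paper's: Rouch\'e's theorem on a rectangle with $f=f_{n,0}$ and $g=-f_{n,1}+f_{n,2}$, Lemma \ref{aida} on the horizontal sides, Lemma \ref{migi} to push the imaginary-axis check down to a single base case, and Lemma \ref{hidari} with base cases in each residue class mod $3$ on the line ${\rm Re}(z)=-\frac{n}{3}$ (the choice $\lambda=3$ instead of $\sqrt{2}$ on the horizontal sides is harmless). The genuine gap is that you push the base case on the vertical sides down to $n=6$, and the required inequality $\varphi_{6,3,1}(z)+\varphi_{6,3,2}(z)<1$ is \emph{false} there. At $z=\sqrt{-1}$ one has $|f_{6,0}(z)|^2=34\cdot 25\cdot 18\cdot 13\cdot 10=1989000$, $|f_{6,1}(z)|^2=36\cdot(20\cdot 13\cdot 8\cdot 5\cdot 4)=36\cdot 41600$ and $|f_{6,2}(z)|^2=225\cdot(10\cdot 5\cdot 2\cdot 1\cdot 2)=225\cdot 200$, which gives $\varphi_{6,3,1}\approx 0.868$, $\varphi_{6,3,2}\approx 0.150$, and a sum of about $1.018>1$; the same failure occurs on the left edge, e.g.\ at $z=-2+\sqrt{-1}$. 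So the triangle-inequality criterion $|f_{n,1}|+|f_{n,2}|<|f_{n,0}|$ on which the whole method rests simply does not hold on $\partial D$ when $n=6$, and no amount of ``locating the relevant critical points'' will rescue that base case. This is also why deferring the five one-variable verifications is not merely a cosmetic omission: one of the five is false.

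The repair, which is exactly what the paper does, is to run the Rouch\'e argument only for $n\ge 7$ --- the imaginary-axis base case is $n=7$, where $\varphi_{7,3,1}+\varphi_{7,3,2}<\frac{1792}{2187}+\frac{14}{81}<1$, and the left-edge base cases are $n=7,8,9$ --- and to dispose of $n=6$ by a completely different device: the explicit factorization $i(\Delta(3,6),z)=\frac{1}{20}(z+1)\left(11(z+1)^4+5(z+1)^2+4\right)$ combined with the Enestr\"om--Kakeya theorem yields $|(a+1)^2|<1$ for every root $a$, hence $-2<{\rm Re}(a)<0$. You need some such separate argument for $n=6$; with that added, and with the remaining four base-case inequalities actually verified (as the paper does by explicit polynomial manipulations), your proof goes through.
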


\begin{proof}
If $n=6$, then the Ehrhart polynomial of $\Delta(3,6)$
is
\begin{eqnarray*}
i(\Delta(3,6), z) &=& 
         { 3 z+5
         \choose
          5
          }
-
6         { 2z+4
         \choose
          5
          }
+
15
         {  z+3
         \choose
         5
          }\\
&=&
\frac{1}{20}
(z+1) \left( 11 (z+1)^4 + 5 (z+1)^2 + 4            \right) .
\end{eqnarray*}
Since $0 < 4 < 5 < 11$ holds,
by Enestr\"om--Kakeya Theorem (see, e.g., \cite{EK}),
it follows that
every root $a \in \CC$ satisfies $| (a + 1)^2 | < 1$.
Hence, in particular, we have $-2< {\rm Re} (a) < 0 $.

Let $n \geq 7$.
We apply Rouch\'e Theorem for
the functions
$$
f(z) = f_{n,0} (z), \ 
g(z) = - f_{n,1} (z) + f_{n,2} (z) 
$$ and 
the region
$$
D=
\left\{
z \in \CC \ \left| \ 
-\frac{n}{3} < {\rm Re} (z) < 0,\ 
- \sqrt{2} n  < {\rm Im} (z) < \sqrt{2} n 
\right.
\right\}.
$$
Remark that the roots of $f(z)$ are
$$
-\frac{n-1}{3}, -\frac{n-2}{3},\ldots,-\frac{2}{3},-\frac{1}{3}
$$
and all of them belong to $D$.
Thus, it is enough to show that
$$
 | f_{n,1} (z)| + | f_{n,2} (z)| < |f_{n,0} (z)|
$$
for all $z \in \partial D$.

\bigskip

\noindent
{\bf Case 1.}
$z = \beta \sqrt{-1}$ where $\beta \in \RR$.

If $n=7$ and $s=1$, then
\begin{eqnarray*}
\frac{
|f_{7,1} (\beta \sqrt{-1})|
}
{
|f_{7,0} (\beta \sqrt{-1})|
}
&  =&
7
\sqrt{
\frac{
(4 \beta^2 + 5^2) (4 \beta^2 + 4^2) (4 \beta^2 + 3^2) (4 \beta^2 + 2^2) (4 \beta^2 + 1^2)  (4 \beta^2)
}
{
(9 \beta^2 + 6^2) (9 \beta^2 + 5^2) (9 \beta^2 + 4^2)(9 \beta^2 + 3^2)(9 \beta^2 + 2^2) (9 \beta^2 + 1)
}
}\\
&  =&
\frac{1792}{2187}
\sqrt{
\frac{
9
(\beta^2 + \frac{25}{4})  (\beta^2 + \frac{9}{4}) (\beta^2 + \frac{1}{4}) \beta^2
}
{
16
 (\beta^2 + \frac{25}{9}) (\beta^2 + \frac{16}{9})(\beta^2 + \frac{4}{9}) (\beta^2 + \frac{1}{9})
}
}.
\end{eqnarray*}
We now show
$$
\frac{
9
(\beta^2 + \frac{25}{4})  (\beta^2 + \frac{9}{4}) (\beta^2 + \frac{1}{4}) \beta^2
}
{
16
 (\beta^2 + \frac{25}{9}) (\beta^2 + \frac{16}{9})(\beta^2 + \frac{4}{9}) (\beta^2 + \frac{1}{9})
 }
<1.
$$
Let
\begin{eqnarray*}
f(x) &=&
16
 \left(x + \frac{25}{9}\right) \left(x + \frac{16}{9}\right) \left(x + \frac{4}{9}\right) \left(x + \frac{1}{9}\right)\\
& & 
-
9
\left(x + \frac{25}{4}\right)  \left(x + \frac{9}{4}\right) \left(x + \frac{1}{4}\right) x\\
&=&
7x^4 + \frac{109}{36} x^3 - \frac{10969}{432} x^2 + \frac{739711}{46656} x+\frac{25600}{6561}.
\end{eqnarray*}
Then, since
$$
f(y+1) = 7y^4 + \frac{1117}{36} y^3 + \frac{11099}{432} y^2 + \frac{100567}{46656} y+\frac{1844635}{419904}
>
0$$
for all $y \geq 0$,
it follows that $f(x) >0$ for all $x \geq 1$.
Moreover, if $0 \leq x < 1$, then
\begin{eqnarray*}
f(x) &>&
6x^4 + 3 x^3 - 27 x^2 + 15 x+3\\
&=&
3 (1-x) (1+x+x (2x+5)(1- x))\\
&>&
0.
\end{eqnarray*}
Thus, $f(x) > 0$ for all $x \geq 0$. 

If $n=7$ and $s=2$, 
\begin{eqnarray*}
& &
\frac{
|f_{7,2} (\beta \sqrt{-1})|
}
{
|f_{7,0} (\beta \sqrt{-1})|
}\\
&  =&
21
\sqrt{
\frac{
(\beta^2 + 4^2) (\beta^2 + 3^2) (\beta^2 + 2^2) (\beta^2 + 1^2) (\beta^2 + 0^2)  (\beta^2 + (-1)^2)
}
{
(9 \beta^2 + 6^2) (9 \beta^2 + 5^2) (9 \beta^2 + 4^2)(9 \beta^2 + 3^2)(9 \beta^2 + 2^2) (9 \beta^2 + 1)
}
}\\
&  =&
\frac{14}{81}
\sqrt{
\frac{
(\beta^2 + 16) (\beta^2 + 9)(\beta^2 + 1) \beta^2  
}
{
36 (\beta^2 + \frac{25}{9}) (\beta^2 + \frac{16}{9})(\beta^2 + \frac{4}{9}) (\beta^2 + \frac{1}{9})
}
}.
\end{eqnarray*}
It then follows that
$$
\frac{
(\beta^2 + 16) (\beta^2 + 9)(\beta^2 + 1) \beta^2  
}
{
36 (\beta^2 + \frac{25}{9}) (\beta^2 + \frac{16}{9})(\beta^2 + \frac{4}{9}) (\beta^2 + \frac{1}{9})
}
<1
$$
since
\begin{eqnarray*}
& &
36
 \left(x + \frac{25}{9}\right) \left(x + \frac{16}{9}\right) \left(x + \frac{4}{9}\right) \left(x + \frac{1}{9}\right)
-
\left(x + 16 \right)  \left(x +  9 \right) \left(x +1 \right) x\\
&=&
35 x^4 + 158 x^3 + \frac{5}{3} x^2 +  \frac{232}{81} x+\frac{3484}{729}
+(10 x -2)^2\\
&>&0
\end{eqnarray*}
for all $x \geq 0$.

Thus, by Lemma \ref{migi},
if $n \geq 7$,
then
\begin{eqnarray*}
\frac{
|f_{n,1}(\beta \sqrt{-1})| + |f_{n,2}(\beta \sqrt{-1})|
}
{
|f_{n,0}(\beta \sqrt{-1})|
}
& \leq &
\frac{
|f_{7,1}(\beta \sqrt{-1})|
}
{
|f_{7,0}(\beta \sqrt{-1})|
}
+
\frac{
|f_{7,2}(\beta \sqrt{-1})|
}
{
|f_{7,0}(\beta \sqrt{-1})|
}\\
& < &
\frac{1792}{2187}+\frac{14}{81}\\
&<& 1.
\end{eqnarray*}
Hence, we have
$
|f_{n,1}(\beta \sqrt{-1})| + |f_{n,2}(\beta \sqrt{-1})|
<
|f_{n,0}(\beta \sqrt{-1})|.
$

\bigskip

\noindent
{\bf Case 2.} 
$z= -\frac{n}{3} + \beta \sqrt{-1} $ with $\beta \in \RR$.

First, we study the case when $n=7, 8, 9$.
If $n=7$, then
\begin{eqnarray*}
& &
\frac{
|f_{7,1} (z)|
}
{
|f_{7,0} (z)|
}\\
&  =&
7
\sqrt{
\frac{
(4 \beta^2 + (\frac{1}{3})^2) 
(4 \beta^2 + (\frac{2}{3})^2) 
(4 \beta^2 + (\frac{5}{3})^2) 
(4 \beta^2 + (\frac{8}{3})^2)
(4 \beta^2 + (\frac{11}{3})^2)
(4 \beta^2 + (\frac{14}{3})^2)
}
{
(9 \beta^2 + 1)
(9 \beta^2 + 2^2)
(9 \beta^2 + 3^2)
(9 \beta^2 + 4^2)
(9 \beta^2 + 5^2)
(9 \beta^2 + 6^2)}
}\\
&  =&
7
\left( \frac{2}{3} \right)^4
\sqrt{
\frac{
(4 \beta^2 + (\frac{1}{3})^2) 
(4 \beta^2 + (\frac{2}{3})^2) 
(4 \beta^2 + (\frac{5}{3})^2) 
(4 \beta^2 + (\frac{8}{3})^2)
}
{
(4 \beta^2 + (\frac{2}{3})^2)
(4 \beta^2 + (\frac{2}{3} \cdot 2)^2)
(4 \beta^2 + (\frac{2}{3} \cdot 3)^2)
(4 \beta^2 +(\frac{2}{3} \cdot 4)^2)
}
}\\
& &
\times
\frac{ \frac{11}{3} }{5} \cdot\frac{ \frac{14}{3} }{6}
\sqrt{
\frac{
((\frac{6}{11})^2 \beta^2 + 1)  ((\frac{3}{7})^2 \beta^2 +1)
}
{
((\frac{3}{5})^2 \beta^2 + 1)  ((\frac{1}{2})^2 \beta^2 + 1)
}
}\\
&\leq&
\frac{8624}{10935},
\end{eqnarray*}
and
\begin{eqnarray*}
& &
\frac{
|f_{7,2} (z)|
}
{
|f_{7,0} (z)|
}\\
&  =&
21
\sqrt{
\frac{
(\beta^2 + (\frac{5}{3})^2) 
(\beta^2 + (\frac{2}{3})^2) 
(\beta^2 + (\frac{1}{3})^2) 
(\beta^2 + (\frac{4}{3})^2) 
(\beta^2 + (\frac{7}{3})^2) 
 (\beta^2 + (\frac{10}{3})^2)
}
{
(9 \beta^2 + 1)
(9 \beta^2 + 2^2)
(9 \beta^2 + 3^2)
(9 \beta^2 + 4^2)
(9 \beta^2 + 5^2)
(9 \beta^2 + 6^2)
}
}\\
&  =&
21 
\cdot \frac{1}{2}
\cdot \frac{1}{3^6}
\cdot \frac{7}{3}
\cdot \frac{10}{3}
\sqrt{
\frac{
( (\frac{3}{7})^2 \beta^2 + 1) 
 ((\frac{3}{10})^2 \beta^2 + 1)
}
{
(\beta^2 + 1)
((\frac{1}{2})^2\beta^2 + 1)
}
}\\
&\leq &
\frac{245}{2187}.
\end{eqnarray*}
Then, 
$
\frac{8624}{10935}
+
\frac{245}{2187}
<1.
$
Moreover, if $n=8$, then
$$
\frac{
|f_{8,1} (z)|
}
{
|f_{8,0} (z)|
}
<
\frac{8}{7}
\sqrt{
\frac{
4 \beta^2 + (\frac{16}{3})^2
}
{
9 \beta^2 + 7^2
}
}
\frac{
|f_{7,1} (z)|
}
{
|f_{7,0} (z)|
}
<
\frac{
|f_{7,1} (z)|
}
{
|f_{7,0} (z)|
},
$$
and
$$
\frac{
|f_{8,2} (z)|
}
{
|f_{8,0} (z)|
}
 < 
\frac{4}{3}
\sqrt{
\frac{
\beta^2 + (\frac{11}{3})^2
}
{
9 \beta^2 +7^2
}
}
\frac{
|f_{7,2} (z)|
}
{
|f_{7,0} (z)|
}
 < 
\frac{
|f_{7,2} (z)|
}
{
|f_{7,0} (z)|
}.
$$
On the other hand, if $n=9$, then we have
\begin{eqnarray*}
& &
\frac{
|f_{9,1} (z)|
}
{
|f_{9,0} (z)|
}\\
&  =&
9 \cdot \frac{2^6}{3^6}
\sqrt{
\frac{
9 \beta^2
(9 \beta^2 + \left(\frac{3}{2}\right)^2)
(9 \beta^2 + \left(\frac{3}{2}\right)^2)
(9 \beta^2 + \left(\frac{9}{2}\right)^2)
(4 \beta^2 + 5^2)
(4 \beta^2 + 6^2)
}
{
(9 \beta^2 + 1)
(9 \beta^2 + 2^2)
(9 \beta^2 + 4^2)
(9 \beta^2 + 5^2)
 (9 \beta^2 +7^2)
 (9 \beta^2 + 8^2)
}
}\\
&<&
\frac{64}{81},
\end{eqnarray*}
and
\begin{eqnarray*}
& &
\frac{
|f_{9,2} (z)|
}
{
|f_{9,0} (z)|
}\\
&  =&
36
\cdot
\frac{1}{2^4 \cdot 3^3}
\sqrt{
\frac{
9 \beta^2 
(4 \beta^2 + 2^2)
(4\beta^2 + 4^2)
(\beta^2 + 3^2)
(4\beta^2 + 6^2)
(4\beta^2 + 8^2)
}
{
(9 \beta^2 + 1) 
(9 \beta^2 + 2^2)
(9 \beta^2 + 4^2)
(9 \beta^2 + 5^2)
 (9 \beta^2 +7^2)
 (9 \beta^2 + 8^2)
}
}\\
&<& \frac{1}{12}.
\end{eqnarray*}
Then, 
$
\frac{64}{81}
+
\frac{1}{12}
<1
.$

Note that $d^2 -2 = 7 \leq n$.
By Lemma \ref{hidari}, it follows that
$$
 | f_{n,1} (z)| + | f_{n,2} (z)| < |f_{n,0} (z)|
.$$

\bigskip

\noindent
{\bf Case 3.}
$z = -\alpha \pm \sqrt{2} n \sqrt{-1}$ with $0 \leq \alpha \leq \frac{n}{3}$.

By Lemma \ref{aida},
$$
\frac{
|f_{n,1} (z)|
}
{
|f_{n,0} (z)|
} <  n \left( \frac{4 + \frac{1}{2} }{9} \right)^\frac{n-1}{2}
=
n \left(\frac{1}{2} \right)^{\frac{n-1}{2}},
$$
and
$$
\frac{
|f_{n,2} (z)|
}
{
|f_{n,0} (z)|
}
< \frac{n(n-1)}{2} \left( \frac{1 + \frac{1}{2} }{9} \right)^\frac{n-1}{2}
=
\frac{n(n-1)}{2} 
\left(\frac{1}{6} \right)^{\frac{n-1}{2}}.
$$
Since
$$
\frac{
(n+1)  \left(\frac{1}{2} \right)^{\frac{n}{2}}
}
{
n \left(\frac{1}{2} \right)^{\frac{n-1}{2}}
}
=
\frac{n+1}{\sqrt{2} n} < 1
$$
and
$$
\frac{
\frac{n(n+1)}{2} 
\left(\frac{1}{6} \right)^{\frac{n}{2}}
}
{
\frac{n(n-1)}{2} 
\left(\frac{1}{6} \right)^{\frac{n-1}{2}}
}
=\frac{n+1}{\sqrt{6} (n-1)} < 1
$$
hold for $n \geq 7$, we have
$$
\frac{
|f_{n,1} (z)|
}
{
|f_{n,0} (z)|
} < 
n \left(\frac{1}{2} \right)^{\frac{n-1}{2}}
\leq 7 \left(\frac{1}{2} \right)^{3}=\frac{7}{8}
$$
and
$$
\frac{
|f_{n,2} (z)|
}
{
|f_{n,0} (z)|
}
< 
\frac{n(n-1)}{2} 
\left(\frac{1}{6} \right)^{\frac{n-1}{2}}
<
21\left(\frac{1}{6} \right)^3=\frac{7}{72}.
$$
Thus,
$
|f_{n,1}(z)| + |f_{n,2}(z)|
<
|f_{n,0}(z)|
$
follows from
$
\frac{7}{8} +\frac{7}{72}
< 1.
$
\end{proof}
\section{The case of $d\geq 4$}

In this section, we study the case of $d \geq 4$.
Although, we could not prove that 
Conjecture \ref{conjecture}
is true for $d \geq 4$,
we prove inequalities which are close to those in 
Conjecture \ref{conjecture} when $d \ll n$.
%

\begin{Theorem}
Suppose that integers $d$ and $n$ satisfy
$d \geq 4$ and $n \geq 6d^2-16d+13$.
Then, 
every root $a \in \CC$ of $i(\Delta(d,n), m)$
satisfies
$$
{\rm Re} (a) < 1.
$$
\end{Theorem}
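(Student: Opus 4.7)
The approach is to mirror the Rouch\'e argument used for $d=3$ (Theorem~2.1). Let $f(z):=f_{n,0}(z)$ and $g(z):=\sum_{s=1}^{d-1}(-1)^s f_{n,s}(z)$, so that $(n-1)!\,i(\Delta(d,n),z)=f(z)+g(z)$. The $n-1$ roots of $f$ are the real numbers $-k/d$ ($k=1,\ldots,n-1$), all contained in $\{{\rm Re}(z)<1\}$. By Rouch\'e's Theorem, it suffices to produce a bounded region $D$ that contains these roots, has its right boundary on $\{{\rm Re}(z)=1\}$, and satisfies $|f(z)|>|g(z)|$ on $\partial D$.

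I would take the rectangle
$D=\{z\in\CC: -\tfrac{n}{d}<{\rm Re}(z)<1,\ |{\rm Im}(z)|<\lambda n\}$
for a suitable constant $\lambda=\lambda_d>0$. The left edge ${\rm Re}(z)=-n/d$ is controlled by iterating Lemma~\ref{hidari} separately within each residue class mod $d$, reducing to finitely many base cases $n\approx d^2-2$ that one verifies directly. The top and bottom edges ${\rm Im}(z)=\pm\lambda n$ are controlled by Lemma~\ref{aida} on the portion with ${\rm Re}(z)\in[-n/d,0]$; a routine extension of the same proof (replacing the key bound $|k-s-(d-s)\alpha|<n$ by $|k-s-(d-s)\alpha|<n+d$) covers the remaining strip ${\rm Re}(z)\in[0,1]$. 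Choosing $\lambda$ large enough then gives the required inequality $\sum_{s=1}^{d-1}\binom{n}{s}\bigl((d-s)^2/d^2+O(1/\lambda^2)\bigr)^{(n-1)/2}<1$ under the hypothesis on $n$.

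The essential new input is the right edge ${\rm Re}(z)=1$, to which none of the existing lemmas applies. Writing $z=1+\beta\sqrt{-1}$ gives
\begin{equation*}
\varphi_{n,d,s}(z)^2=\binom{n}{s}^{2}\prod_{j=1}^{n-1}\frac{(d-2s+j)^2+(d-s)^2\beta^2}{(d+j)^2+d^2\beta^2},
\end{equation*}
and one must show $\sum_{s=1}^{d-1}\varphi_{n,d,s}(z)<1$ uniformly in $\beta\in\RR$. The algebraic identity $(d-s)(d+j)/d-(d-2s+j)=s(d-j)/d$ yields the factorwise comparison $(d-s)(d+j)/d\geq|d-2s+j|$ for most values of $j$, suggesting a bound of the form $\varphi_{n,d,s}(1+\beta\sqrt{-1})\leq\binom{n}{s}\rho_s^{n-1}$ with $\rho_s<1$; the small-$j$ regime in which $d-2s+j$ is close to zero (or of opposite sign) requires a separate, more delicate estimate. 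The quadratic threshold $n\geq 6d^2-16d+13$ should then emerge as the quantitative condition ensuring $\sum_{s=1}^{d-1}\binom{n}{s}\rho_s^{n-1}<1$, the binding constraint coming from the $s=1$ term for which $\rho_1$ is closest to~$1$.

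The main obstacle is precisely this right-edge estimate at ${\rm Re}(z)=1$: proving it uniformly in $\beta$ is considerably harder than the analogous estimates on the imaginary axis (Lemma~\ref{migi}) and on ${\rm Re}(z)=-n/d$ (Lemma~\ref{hidari}), and recovering the correct quadratic-in-$d$ dependence of the threshold on $n$ will require a careful factor-by-factor analysis that goes beyond what the existing lemmas provide.
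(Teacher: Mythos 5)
Your proposal has a genuine gap at exactly the point you flag yourself: the estimate on $\{{\rm Re}(z)=1\}$ is not an auxiliary difficulty to be deferred --- it is the entire content of the theorem, and you do not carry it out. The paper's proof consists precisely of this estimate, done on the whole closed half-plane ${\rm Re}(z)\geq 1$: substituting $m=z-1$ (so ${\rm Re}(m)\geq 0$) turns the factors into $(d-s)m+(d+n-i-2s)$, and the product is split at $i=n-d$. For $i\leq n-d$ one has $0<\frac{d-s}{d+n-i-2s}\leq\frac{d}{d+n-i}$, and since $|cm+1|$ is increasing in $c>0$ when ${\rm Re}(m)\geq 0$, each such factor ratio is at most $\frac{d+n-i-2s}{d+n-i}$; the remaining $d-1$ factors (where the constant $2d-j-2s$ may be small or negative) are each bounded by $\frac{d-1}{d}$ via $\bigl|\frac{2d-j-2s}{d-1}\bigr|<\frac{2d-j}{d}$. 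The telescoping product then gives $\frac{|f_{n,s}(z)|}{|f_{n,0}(z)|}\leq\frac{1}{s!}\bigl(\frac{(2d-1)(2d-2)}{d+n-1}\cdot\frac{d-1}{d}\bigr)^{s}$, and the hypothesis $n\geq 6d^2-16d+13$ is exactly what makes the base at most $\frac{2}{3}$, so that $\sum_{s\geq 1}\frac{|f_{n,s}|}{|f_{n,0}|}<e^{2/3}-1<1$. (Your guess that the threshold is forced by the $s=1$ term alone is not how it enters: the condition makes the bound $\frac{2^s}{3^ss!}$ hold \emph{uniformly} in $s$ so the exponential series closes the argument.) Your algebraic identity $(d-s)(d+j)/d-(d-2s+j)=s(d-j)/d$ is the right object, but note its sign: the useful inequality $\frac{d-s}{d+j-2s}\leq\frac{d}{d+j}$ holds for $j\geq d$ (the vast majority of factors), not for small $j$; the small-$j$ factors are the ones needing the separate $\frac{d-1}{d}$ bound. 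Without this analysis written down, the proof is missing.

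A secondary structural point: because the claim is only that no root lies in $\{{\rm Re}(z)\geq 1\}$, once you have $|f_{n,0}(z)|>\bigl|\sum_{s\geq 1}(-1)^sf_{n,s}(z)\bigr|$ on that closed half-plane you are done --- $f+g$ cannot vanish there --- and no Rouch\'e region is needed at all. Your rectangle forces you to also control the left edge and the horizontal edges, and those parts of your sketch are themselves not sound as stated: iterating Lemma \ref{hidari} down to ``finitely many base cases $n\approx d^2-2$ that one verifies directly'' is $d$ base cases for each of infinitely many $d$, so ``direct verification'' is not available uniformly in $d$ (the paper only does this for $d=3$, where the base cases are $n=7,8,9$), and the choice of $\lambda$ making Lemma \ref{aida} close is left unspecified. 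All of this work evaporates under the half-plane formulation.
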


\begin{proof}
First, we prove that,
every $z \in \CC$ with
$1 \leq {\rm Re} (z) $
satisfies
$$
\frac{ |  f_{n,s} (z)  |}{ | f_{n,0} (z) |}
<
 \frac{2^s }{3^s \ s!}
$$
for $s = 1,2, \ldots, d-1$.
Let $z = \alpha +1 + \beta \sqrt{-1}$ with $\alpha \geq 0$ and
$\beta \in \RR$
and let $m = z-1 \in \CC$.
Then, 
$$
f_{n,s} (z)  =
{n \choose s}  
((d-s) m+d+n-1 -2s) \cdots ((d-s) m+d+ 1 -2s).
$$
For $i = 1,2,\ldots,n-d$, we have
$$
d(d+n-i-2s)- 
(d-s)(d+n-i)
=
s(n-d-i) \geq 0,
$$
and hence,
$$
0<
\frac{d-s}{d+n-i-2s} 
\leq \frac{d}{d+n-i}.
$$
Thus,
\begin{eqnarray*}
\left|
\frac{
(d-s) m+d+n-i -2s
}
{
d m+d+n-i
}
\right|
&=&
\frac{d+n-i -2s}{d+n-i}
\sqrt{
\frac{
(\frac{d-s}{d+n-i -2s} \beta)^2+ (\frac{d-s}{d+n-i -2s} \alpha +1)^2
}
{
(\frac{d}{d+n-i} \beta)^2+ (\frac{d}{d+n-i} \alpha +1)^2
}
}\\
& \leq &
\frac{d+n-i -2s}{d+n-i}.
\end{eqnarray*}
On the other hand,
for $j = 1,2,\ldots, s$, since
$$
(d-1)(2d-j) -d(2d-j-2s) = 
2d (s-1) + j  >0
$$
and
$$
(d-1)(2d-j) +d(2d-j-2s) = 
2d(2(d - 1 - s) + (s - j) + 1) +j
>0
$$
hold, we have
$$
\left|\frac{2d-j-2s}{d-1} \right|
< 
\frac{2d-j}{d}.
$$
Thus,
\begin{eqnarray*}
\left|
\frac{
(d-s) m+2d -j -2s
}
{
d m+2d -j
}
\right|
&=&
\frac{d-1}{d}
\left|
\frac{
\frac{d-s}{d-1} m+\frac{2d -j -2s}{d-1}
}
{
m+\frac{2d -j}{d}
}
\right|\\
&=&
\frac{d-1}{d}
\sqrt{
\frac{
(\frac{d-s}{d -1} \beta)^2+ (\frac{d-s}{d-1} \alpha +\frac{2d -j -2s}{d-1})^2
}
{
\beta^2+ (\alpha +\frac{2d-j}{d} )^2
}
}\\
& < &
\frac{d-1}{d}.
\end{eqnarray*}
In addition, for $k=1,2,\ldots,s$,
\begin{eqnarray*}
& & (2d-1)(2d-2)
(d+n-k)
-
(2d-2k+1)(2d-2k)(d+n-1)\\
&=&
2(k-1)
(
2(d+n)(d-1-k) + 2dn+n+2k
) \geq 0.
\end{eqnarray*}
Hence,
$$
\frac{(2d-2k+1)(2d-2k)}{d+n-k}
\leq 
\frac{(2d-1)(2d-2)}{d+n-1}.
$$
Therefore,
\begin{eqnarray*}
& &
\frac{|f_{n,s} (z) |}{|f_{n,0} (z) |} \\
&=&
{n \choose s}  
\frac{
|(d-s) m+d+n-1 -2s| \cdots |(d-s) m+d+ 1 -2s|
}
{
|d m+d+n-1| \cdots |d m+d+ 1|
}\\
&=&
{n \choose s}  
\prod_{i=1}^{n-d}
\left|
\frac{
(d-s) m+d+n-i -2s
}
{
d m+d+n-i 
}
\right|
\ 
\prod_{j=1}^{s}
\left|
\frac{
(d-s) m+2d-j -2s
}
{
d m+2d-j 
}
\right|\\
& &
\times 
\prod_{j=s+1}^{d-1}
\left|
\frac{
(d-s) m+2d-j -2s
}
{
d m+2d-j 
}
\right|\\
&<&
{n \choose s}  
\prod_{i=1}^{n-d}
\frac{
d+n-i -2s
}
{
d+n-i 
}
\ 
\left(
\frac{d-1}{d}
\right)^{s}
\\
& = &
{n \choose s}  
\frac{
(2d-1) \cdots (2d-2s)
}
{
(d+n-1) \cdots (d+n-2s)
}
\left(
\frac{d-1}{d}
\right)^{s}\\
&=&
\frac{1}{s!}
\left(
\prod_{k=1}^s
\frac{n+1-k}{d+n-k-s}
\right)
\left(
\prod_{k=1}^s
\frac{(2d-2k+1)(2d-2k)}{d+n-k}
\right)
\left(
\frac{d-1}{d}
\right)^{s}\\
& \leq &
\frac{1}{s!}
\left(
\frac{(2d-1)(2d-2)}{d+n-1}
\cdot
\frac{d-1}{d}
\right)^{s}\\
& \leq &
\frac{2^s}{3^s s!}
\left(
\frac{(2d-1)(d-1)^2}{d(  2d^2-5d+4)}
\right)^{s}.
\end{eqnarray*}
Since $d\geq 4$,
$$
1-\frac{(2d-1)(d-1)^2}{d(  2d^2-5d+4)}
=\frac{1}{d(  2d^2-5d+4)}
=\frac{1}{d( 2 (d-1)(d-2) +d)}
>0.
$$
Thus,
$$
\sum_{s=1}^{d-1} 
\frac{
 |  f_{n,s} (z)  |}{ | f_{n,0} (z) |}
<
\sum_{s=1}^{d-1} \frac{2^s}{3^s s!}
<
-1+e^{\frac{2}{3}}
<
1,
$$
and hence
$$ \left|\sum_{s=1}^{d-1} (-1)^s f_{n,s} (z)  \right| <| f_{n,0} (z) |.$$
Therefore,
$z$ is not a root of $i(\Delta(d,n),m)$.
\end{proof}

\begin{Theorem}
Suppose that integers $d$ and $n$ satisfy
$d \geq 4$ and $n \geq d^2 + 2 d$.
Then, 
every root $a \in \CC$ of $i(\Delta(d,n), m)$
satisfies
$$
- \frac{n}{d} < {\rm Re} (a) .
$$
\end{Theorem}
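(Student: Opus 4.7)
The plan is to apply Rouch\'e's theorem on a region $D$ inside the closed half-plane $\{z : {\rm Re}(z) \leq -\frac{n}{d}\}$, intersected with a rectangle large enough (by the disc bound of \cite{Bra}) to contain every potential root of $i(\Delta(d,n), z)$ lying in this half-plane. Set $f(z) = f_{n,0}(z)$ and $g(z) = \sum_{s=1}^{d-1}(-1)^s f_{n,s}(z)$, so that $(n-1)!\, i(\Delta(d,n), z) = f + g$. The zeros of $f$ are $-\tfrac{1}{d}, -\tfrac{2}{d}, \ldots, -\tfrac{n-1}{d}$, all strictly to the right of the line ${\rm Re}(z) = -\frac{n}{d}$; hence $f$ has no zero in $D$. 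If I can establish $|f(z)| > |g(z)|$ on $\partial D$, Rouch\'e gives that $f + g$ has no zero in $D$ either, and the theorem follows.

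The boundary $\partial D$ splits into three parts. On the far-left vertical side and the two horizontal sides, where $|z|$ is large, the desired inequality reduces via leading-coefficient comparison to $1 > \sum_{s=1}^{d-1}\binom{n}{s}(1 - s/d)^{n-1}$. The dominant ($s=1$) term, $n(1 - 1/d)^{n-1} \leq n\,e^{-(n-1)/d}$, is comfortably less than $1$ once $n \geq d^2 + 2d$ with $d \geq 4$, and higher terms are smaller still. A cleaner version on the horizontal segments ${\rm Im}(z) = \pm \lambda n$ is furnished by Lemma~\ref{aida}: choosing $\lambda$ large enough that $\sum_s \binom{n}{s}((d-s)^2 + 1/\lambda^2)^{(n-1)/2} / d^{n-1} < 1$ handles those sides at once.

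The principal obstacle is the right boundary of $D$, the critical line ${\rm Re}(z) = -\frac{n}{d}$. Parametrising $z = -\tfrac{n}{d} + \beta\sqrt{-1}$ yields
\[
\varphi_{n,d,s}(z)^2 = \binom{n}{s}^2 \prod_{j=1}^{n-1}\frac{(d-s)^2\beta^2 + (j - \theta)^2}{d^2\beta^2 + j^2}, \qquad \theta = \frac{s(n-d)}{d},
\]
and the target is $\sum_{s=1}^{d-1}\varphi_{n,d,s}(z) < 1$. For large $|\beta|$ the Lemma~\ref{aida}-style argument still applies. The delicate case is small-to-moderate $|\beta|$: for $j$ much less than $\theta$ the factor $(j-\theta)^2/j^2$ can exceed $1$, so the product cannot be bounded term-by-term. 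My plan is to exploit the approximate symmetry of $(j - \theta)^2$ around $\theta/2$ by pairing the factor at $j$ with the factor near $\theta - j$, or equivalently to rewrite $\prod_{j=1}^{n-1}(j - \theta) = \Gamma(n - \theta)/\Gamma(1 - \theta)$ and apply Stirling's formula; the resulting estimate, multiplied by $\binom{n}{s}^2$, should give exponential decay in $n$ for each $s \geq 1$. The hypothesis $n \geq d^2 + 2d$ -- strictly stronger than Lemma~\ref{hidari}'s $n \geq d^2 - 2$ -- provides the slack needed to absorb lower-order contributions. Alternatively, Lemma~\ref{hidari} could be iterated to reduce the assertion for general $n$ to a finite set of base cases $n_0 \in \{d^2+2d, \ldots, d^2+3d-1\}$, with those base cases themselves verified by the Gamma-function estimate. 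I expect this product estimate on the critical line to be the chief technical challenge.
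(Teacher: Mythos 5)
Your framework is workable and you have located the difficulty correctly, but the proof is not there: the entire content of the theorem lives in the product estimate on (and to the left of) the line ${\rm Re}(z)=-\frac{n}{d}$, and you leave that estimate as a plan rather than carrying it out. The pairing idea you sketch is the right germ. The paper's version of it is to write $z=-m-\frac{n}{d}$ with ${\rm Re}(m)\geq 0$, note that the numerator factors $(d-s)m+k+s-\frac{ns}{d}$ with negative constant term (there are $\lfloor\frac{ns}{d}\rfloor-s$ of them) can be re-matched, in order of increasing absolute value of the constant, against the denominator factors $dm+1, dm+2,\ldots$, so that each such ratio is $<1$. But that step only \emph{neutralizes} the problematic factors; it produces no decay. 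The actual work, which your outline does not engage with, is to extract from the middle factors ($\lfloor\frac{ns}{d}\rfloor-s<k\leq n-d$) a factor $\left(\frac{d-s}{d}\right)^{(d-s)(\frac{n}{d}-1)}$, show that this beats ${n\choose s}\leq\frac{n^s}{s!}$ together with the $d-1$ rightmost factors, prove the resulting bound is decreasing in $n$ so that it suffices to check $n=d^2+2d$, and then control that value uniformly in $s$ and $d$ (the paper does this by showing the logarithm of the bound is convex in $s$ and checking the endpoints $s=1$, $s=d-2$, and separately $s=d-1$, aiming at the threshold $\frac{1}{d-1}$ per term). This is precisely where the hypothesis $n\geq d^2+2d$ is consumed; nothing in your proposal touches it, and a Stirling/Gamma estimate of $\prod_{j}(j-\theta)$ alone will not organize the $s$- and $d$-dependence for you.

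Two structural points besides. First, the Rouch\'e-plus-Braun contour is unnecessary and creates extra boundary pieces you then must estimate: the paper proves $\sum_{s=1}^{d-1}|f_{n,s}(z)|<|f_{n,0}(z)|$ for \emph{every} $z$ in the closed half-plane ${\rm Re}(z)\leq-\frac{n}{d}$ (the parameter $\alpha={\rm Re}(m)\geq 0$ rides along for free in all three ranges of factors), so the triangle inequality alone shows $i(\Delta(d,n),z)\neq 0$ there --- no contour, no disc bound, and no separate treatment of horizontal or far-left sides, where your ``leading-coefficient comparison'' is only an asymptotic statement, not a bound at a finite abscissa. Second, the fallback of iterating Lemma~\ref{hidari} does not close the gap: that lemma only controls the right edge of your region, and the base cases $n_0\in\{d^2+2d,\ldots,d^2+3d-1\}$ form an infinite family as $d$ ranges over $d\geq 4$, so they require exactly the uniform-in-$d$ estimate that is missing.
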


\begin{proof}
We prove that,
every $z \in \CC$ with
${\rm Re} (z) \leq - \frac{n}{d}$
satisfies
$$
\frac{ |  f_{n,s} (z)  |}{ | f_{n,0} (z) |}
<
 \frac{1}{d-1}
$$
for $s = 1,2, \ldots, d-1$.
Let $z = -\alpha -\frac{n}{d} - \beta \sqrt{-1}$ with $\alpha \geq 0$ and
$\beta \in \RR$
and let $m = -z-\frac{n}{d} =\alpha + \beta \sqrt{-1}\in \CC$.
Then,
\begin{eqnarray*}
f_{n,s} (z)  
 &=&
(-1)^{n-1}
{n \choose s}  
((d-s) m- \frac{ns}{d} +1+s) \cdots ((d-s) m-\frac{ns}{d} +n-1 +s)
\end{eqnarray*}
and
\begin{eqnarray*}
& &
\frac{|f_{n,s} (z)  |}{|f_{n,0} (z) |} \\
&=&
{n \choose s}  
\frac{
|(d-s) m- \frac{ns}{d} +1+s | \cdots | (d-s) m-\frac{ns}{d} +n-1 +s|
}
{
|d m+1| \cdots |d m+n-1|
}\\
&=&
{n \choose s}  
\prod_{k=1}^{\lfloor \frac{ns}{d} \rfloor -s}
\left|
\frac{
(d-s) m-( \frac{ns}{d} - \lfloor \frac{ns}{d} \rfloor -1+k)
}
{
d m+k
}
\right|
\\
& &
\times
\prod_{k=\lfloor \frac{ns}{d} \rfloor -s+1}^{n-d}
\left|
\frac{
 (d-s) m-\frac{ns}{d} +k+s
}
{
d m+k
}
\right|
\times
\prod_{k=n-d+1}^{n-1}
\left|
\frac{
 (d-s) m-\frac{ns}{d} +k+s
}
{
d m+k
}
\right|.
\end{eqnarray*}

For 
$k=1,2,\ldots,\left\lfloor \frac{ns}{d} \right\rfloor -s$,
we have
$$
-k <
1-\left( \frac{ns}{d} - \left\lfloor \frac{ns}{d} \right\rfloor \right)-k
\leq
1-k \ (\leq 0).$$
Hence,
$$
\left|
\frac{
(d-s) m-( \frac{ns}{d} - \left\lfloor \frac{ns}{d} \right\rfloor -1+k)
}
{
d m+k
}
\right|
<1.
$$

For 
$k=\left\lfloor \frac{ns}{d} \right\rfloor -s+1,
\left\lfloor \frac{ns}{d} \right\rfloor -s+2,\ldots,n-d$,
we have
$$
-\frac{ns}{d} +k+s >0
$$
and
$$
\frac{k}{d}
-
\frac{-\frac{ns}{d} +k+s}{d-s}
=
\frac{s}{d(d-s)}
(n-d-k) \geq 0.
$$
Hence,
for 
$k=\left\lfloor \frac{ns}{d} \right\rfloor -s+1,
\left\lfloor \frac{ns}{d} \right\rfloor -s+2,\ldots,n-d$,
$$
\left|
\frac{
 (d-s) m-\frac{ns}{d} +k+s
}
{
d m+k
}
\right|
=
\frac{d-s}{d}
\left|
\frac{
m+\frac{-\frac{ns}{d} +k+s}{d-s}
}
{
m+\frac{k}{d}
}
\right|
< 
\frac{d-s}{d}.
$$

For
$k=n-d+1,n-d+2,\ldots,n-1$, we have
$$
\frac{d}{k}-\frac{d-s}{-\frac{ns}{d} +k+s}
=
\frac{d s (k-(n-d))}{k (kd -(n-d)s )}>0.
$$
Hence
$$
\left|
\frac{
 (d-s) m-\frac{ns}{d} +k+s
}
{
d m+k
}
\right|
=
\frac{-\frac{ns}{d} +k+s}{k}
\left|
\frac{
\frac{d-s}{-\frac{ns}{d} +k+s}
m+1
}
{
\frac{d}{k} m+1
}
\right|
<
\frac{-\frac{ns}{d} +k+s}{k}.
$$

Therefore,
\begin{eqnarray*}
& &
\frac{|f_{n,s} \left(-m-\frac{n}{d}\right)  |}{|f_{n,0}\left(-m-\frac{n}{d}\right) |} \\
& \leq &
{n \choose s}  
\left(
\frac{d-s}{d}
\right)^{n-d- \lfloor \frac{ns}{d} \rfloor+s}
\frac{
-\frac{ns}{d} +n-d+s+1
}
{
n-d+1
}
\cdots
\frac{
-\frac{ns}{d} +n-1 +s
}
{
n-1
}\\
& \leq &
\frac{n^s}{s!}  
\left(
\frac{d-s}{d}
\right)^{(d-s)(\frac{n}{d} -1)}
\frac{
-\frac{ns}{d} +n-d+s+1
}
{
n-d+1
}
\cdots
\frac{
-\frac{ns}{d} +n-1 +s
}
{
n-1
}.
\end{eqnarray*}
Let
$$
g (n,d,s) = \log
\left(
\frac{n^s}{\Gamma(s+1)}  
\left(
\frac{d-s}{d}
\right)^{(d-s)(\frac{n}{d} -1)}
\right),
$$
where $\Gamma (-)$ is the gamma function.
Since $1 \leq  s \leq d-1$,
we have 
$$
\log \frac{d-s}{d} < -\frac{s}{d}
$$
and hence,
for any $n \geq d^2+2 d$,
\begin{eqnarray*}
\frac{\partial g}{\partial n}
&=&
\frac{s}{n}
+
\frac{d-s}{d}
\log
\left(\frac{d-s}{d}\right)\\
&<&
\frac{s}{d^2+2d}
+
\frac{d-s}{d}
\left(-\frac{s}{d} \right)\\
&=&
\frac{s}{d^2(d+2)}(-2-(d+2)(d-1-s))\\
&<&0.
\end{eqnarray*}
Thus, $g(n+1,d,s) < g(n,d,s)$.
Moreover,
for $k=1,2,\ldots,d-1$ ($<n$),
$$
\frac{\partial}{\partial n}
\left(
\frac{
-\frac{ns}{d} +n-k+s
}
{
n-k
}
\right)
=
-
\frac{
s (d-k)
}{d (n-k)^2}
< 0.
$$
Therefore, for $n \geq d^2+2d$,
\begin{eqnarray*}
& &
\frac{|f_{n,s} \left(-m-\frac{n}{d}\right)  |}{|f_{n,0}\left(-m-\frac{n}{d}\right) |} \\
& \leq &
\frac{(d^2+2d)^s}{s!}  
\left(
\frac{d-s}{d}
\right)^{(d-s)(d+1)}
\frac{
(d-s)(d+1)+1
}
{
d^2+d+1
}
\cdots
\frac{
(d-s)(d+1)+d-1
}
{
d^2+2d-1
}.
\end{eqnarray*}

\noindent
{\bf Case 1.}
Suppose $s=d-1$.

For each $d\geq 4$,
\begin{eqnarray*}
& &
\frac{(d^2+2d)^{d-1}}{(d-1)!}  
\left(
\frac{1}{d}
\right)^{d+1}
\frac{
d+2
}
{
d^2+d+1
}
\cdots
\frac{
2d
}
{
d^2+2d-1
}\\
&=&
\frac{1}{d}
\frac{(d+2)^{d-1}}{d!}  
\frac{
d+2
}
{
d^2+d+1
}
\cdots
\frac{
2d
}
{
d^2+2d-1
}\\
&=&
\frac{1}{d}
\cdot
\frac{2^{d-2} \ 3}{d!}
\frac{
2d(d+2)
}
{
3(d^2+2d-1)
}
\prod_{k=1}^{d-2}
\frac{
(d+2)(d+1+k)
}
{
2(d^2+d+k)
}.
\end{eqnarray*}
Note that
$$
\frac{2^{d-2} \ 3}{d!}
 = \prod_{k=4}^d \frac{2}{k} < 1,
$$
$$
3(d^2+2d-1)-2d(d+2)
=(d-1)(d+3) > 0, 
$$
and, for each $1\leq k \leq d-2$
$$
2(d^2+d+k)- (d+2)(d+1+k)
=
d(d  - 2-k) + d -2 >0.
$$
Hence, 
$$
\frac{(d^2+2d)^{d-1}}{(d-1)!}  
\left(
\frac{1}{d}
\right)^{d+1}
\frac{
d+2
}
{
d^2+d+1
}
\cdots
\frac{
2d
}
{
d^2+2d-1
}
<
\frac{1}{d-1}.
$$

\noindent
{\bf
Case 2.}
Suppose $1 \leq s \leq d-2$.

Let
$$h(d,s) = 
\log
\left(
(d-1)
\frac{(d^2+2d)^s}{\Gamma(s+1)}
\left(
\frac{d-s}{d}
\right)^{(d-s)(d+1)}
\right).
$$
Then, for $1 \leq s \leq d-2$,
$$ 
\frac{\partial h}{\partial s}
=
\log (d^2+2d)
-
\frac{\Gamma(s+1)'}{\Gamma(s+1)}
-(d+1)
\left(
\log \frac{d-s}{d}
+
1
\right)
$$
and
$$
\frac{\partial^2 h}{\partial s^2 }
=
\frac{
d+1}
{
d-s
}
-\sum_{\ell=0}^\infty \frac{1}{(s+1+\ell)^2}
\geq
\frac{
d+1}
{
d-s
}
-
\left(
\frac{\pi^2}{6}
-1
\right)
=
\frac{
s+1}
{
d-s
}
+
\left(
2-
\frac{\pi^2}{6}
\right)
>0.
$$
Thus, for each $d$, 
we have $h(d,s) \leq \max (h(d,1), h(d,d-2) ) $
for all $1 \leq s \leq d-2$.

We now show that $h(d,1) <0$ and $h(d,d-2) <0$.
Note that, for $d \geq 4$,
$$
h(d,1) 
= 
\log
(d-1)
+\log
(d^2+2d)
+
(d-1)(d+1)
\log
\left(
\frac{d-1}{d}
\right)
$$
and
\begin{eqnarray*}
\frac{\partial h(d,1)}{\partial d}
&=&
\frac{1}{d-1}
+\frac{2d+2}{d^2+2d}
+
\frac{d+1}{d}
+
2d
\log
\left(
\frac{d-1}{d}
\right)
\\
&<&
\frac{1}{d-1}
+\frac{2d+2}{d^2+2d}
+
\frac{d+1}{d}
-
2
\\
&=&
-\frac{(d-4)(d^2+d-1)}{(d-1)d(d+2)}\\
&\leq&0.
\end{eqnarray*}
Thus, for every $d \geq 4$,
we have
$h (d,1) \leq h(4,1) = \log 72+15 \log \frac{3}{4} 
= \log \frac{129140163}{134217728} <0$.

On the other hand,
\begin{eqnarray*}
(d-1)
\frac{(d^2+2d)^{d-2}}{(d-2)!}
\left(
\frac{2}{d}
\right)^{2 (d+1)}
&=& 
\frac{2^6\ \ 6^{d-2} (d-1)}{ (d-2)! d^6}
\left(
\frac{4 (d^2+2d)}{6 d^2}
\right)^{d-2}\\
&=&
\frac{2^6\ \ 6^{d-2} (d-1)}{(d-2)! d^6}
\left(
\frac{3d -(d-4)}{3 d}
\right)^{d-2}\\
&\leq &
\frac{2^6\ \ 6^{d-2} (d-1)}{(d-2)! d^6}.
\\
& = &
\frac{2^6\ \ 6^{d-2} (d-1)^2 (d+1)}{(d+1)! d^5}
\\
& = &
\frac{2^6\ \ 6^{d-2} (d-1)}{(d+1)! d^3}
\cdot
\frac{d^2-1}{d^2}\\
& < &
\frac{2^6\ \ 6^{d-2} (d-1)}{(d+1)! d^3}.
\end{eqnarray*}
For $d = 4$,
$$
\frac{2^6\ \ 6^{4-2} (4-1)}{(4+1)!\ 4^3} = \frac{9}{10} < 1,
$$
and for $d \geq 5$,
$$
\frac{2^6\ \ 6^{d-2} (d-1)}{(d+1)! d^3}
=
\frac{2^6\ \ 6^2 }{5!\ 5^2}
\cdot
\frac{5^2}{d^2}
\cdot
\frac{d-1}{d}
\prod_{k=6}^{d+1}
\frac{6}{k}
<
\frac{96}{125}
<1.
$$
Thus,
\begin{eqnarray*}
h(d,d-2)
&=& 
\log
\left(
(d-1)
\frac{(d^2+2d)^{d-2}}{(d-2)!}
\left(
\frac{2}{d}
\right)^{2 (d+1)}
\right) <0.
\end{eqnarray*}
Therefore, 
it follows that, for every 
$1 \leq s \leq d-1$,
\begin{eqnarray*}
& &
\frac{|f_{n,s} \left(-m-\frac{n}{d}\right)  |}{|f_{n,0}\left(-m-\frac{n}{d}\right) |} \\
& \leq &
\frac{(d^2+2d)^s}{s!}  
\left(
\frac{d-s}{d}
\right)^{(d-s)(d+1)}
\frac{
(d-s)(d+1)+1
}
{
d^2+d+1
}
\cdots
\frac{
(d-s)(d+1)+d-1
}
{
d^2+2d-1
}\\
&<&
\frac{1}{d-1}.
\end{eqnarray*}

Hence,
$$
\sum_{s=1}^{d-1} 
\frac{
 |  f_{n,s} (z)  |}{ | f_{n,0} (z) |}
<
\sum_{s=1}^{d-1} \frac{1}{d-1}
=
1,
$$
and
$$ \left|\sum_{s=1}^{d-1} (-1)^s f_{n,s} (z)  \right| <| f_{n,0} (z) |.$$
Thus,
$z$ is not a root of $i(\Delta(d,n),m)$.
\end{proof}
\section{Computational experiments}

In this section, some computational experiments are given.
First, we computed the approximate roots of the Ehrhart polynomial
$i(\Delta(d,n),m)$
\begin{itemize}
\item
for $4 \le d \le 10$ and $2d \le n \le d^2 + 2d$ \ \ \ (Figures \ref{d4} -- \ref{d10}),
\item
for $4 \le d \le 75$ and $n=2d$  \ \ \  (Figure \ref{d4_75}),
\end{itemize}
by using the software package \texttt{Mathematica}
\cite{Mathematica}~(``\texttt{N}'' and ``\texttt{Solve}") and \texttt{gnuplot}.
Second,
by the software \texttt{Maple} \cite{Maple}~(``\texttt{Hurwitz}"),
we checked that every root $\alpha$ of the Ehrhart polynomial $i(\Delta(d,n),m)$ satisfies
$$
- \frac{n}{d} < {\rm Re} (\alpha) < 0
$$
for $4 \le d \le 10$ and $2d \le n \le d^2 + 2d$.


\begin{figure}[htbp]
\begin{center}
\begin{tabular}{c}
\begin{minipage}{0.5\hsize}
\begin{center}
\rotatebox{-90}{\includegraphics[width=5cm]{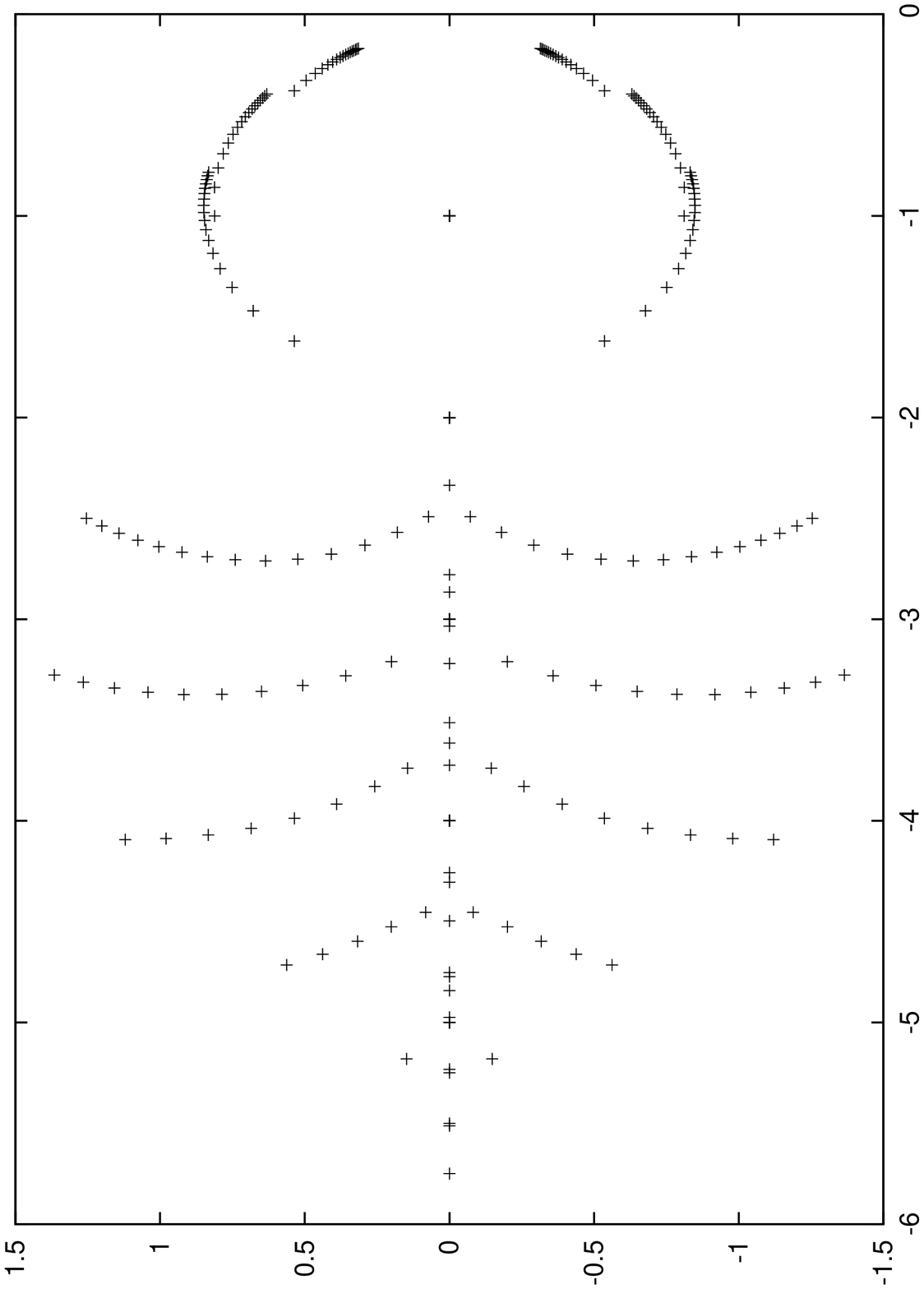}}
\caption{$d=4$}
\label{d4}
\end{center}
\end{minipage}
\begin{minipage}{0.5\hsize}
\begin{center}
\rotatebox{-90}{\includegraphics[width=50mm]{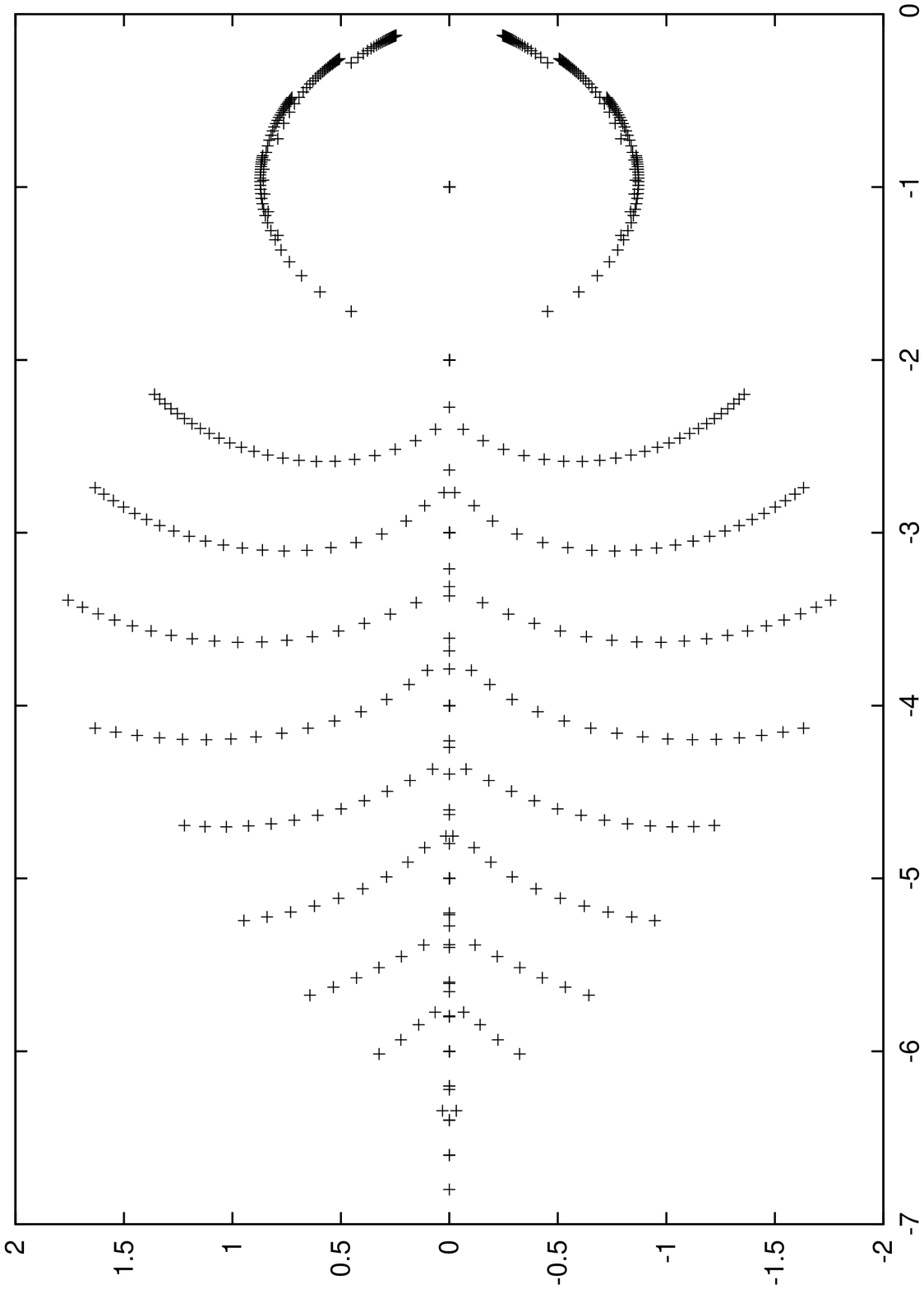}}
\caption{$d=5$}
\label{d5}
\end{center}
\end{minipage}
\end{tabular}
\end{center}
\end{figure}

\begin{figure}[htbp]
\begin{center}
\begin{tabular}{c}
\begin{minipage}{0.5\hsize}
\begin{center}
\rotatebox{-90}{\includegraphics[width=5cm]{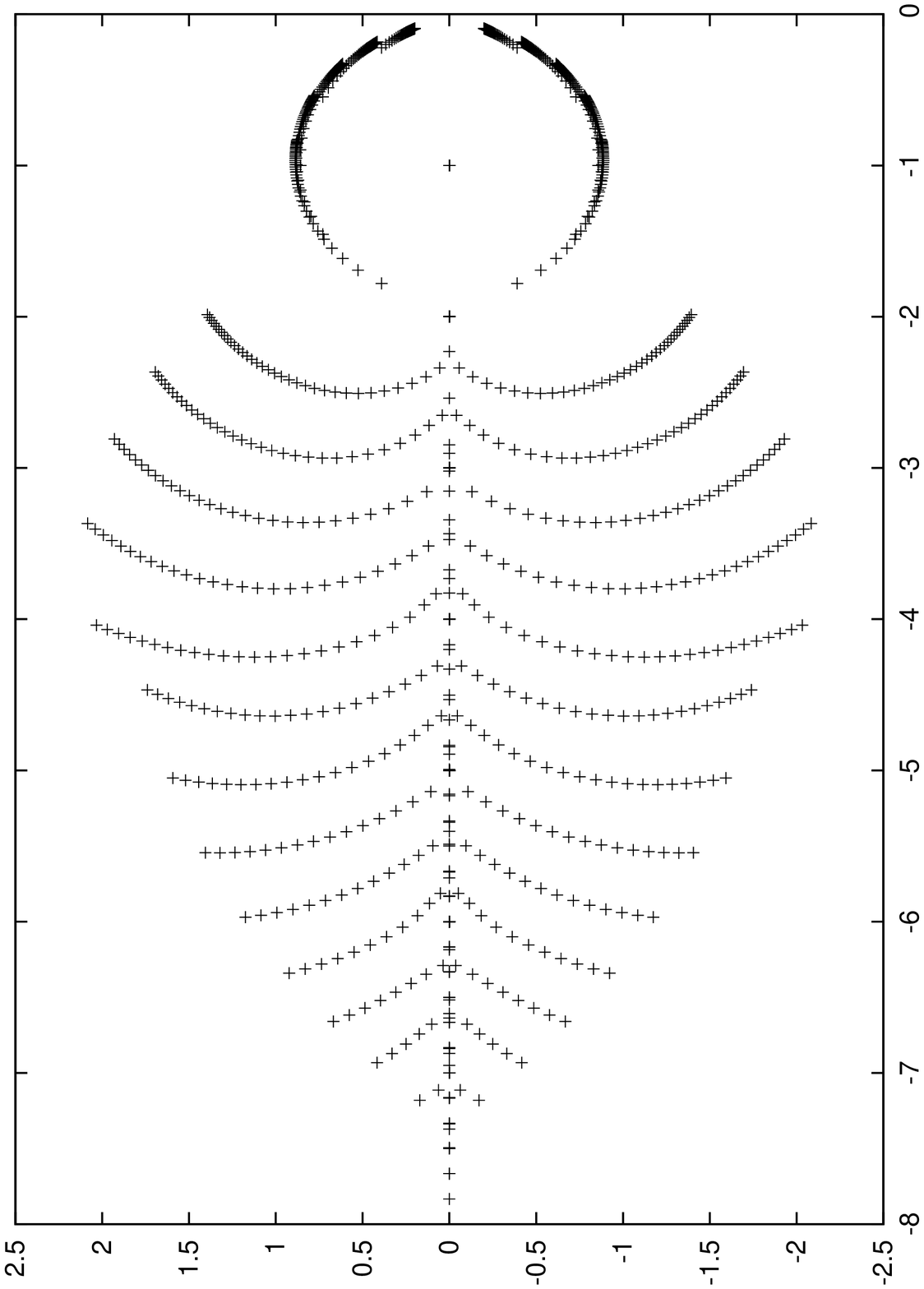}}
\caption{$d=6$}
\label{d6}
\end{center}
\end{minipage}
\begin{minipage}{0.5\hsize}
\begin{center}
\rotatebox{-90}{\includegraphics[width=50mm]{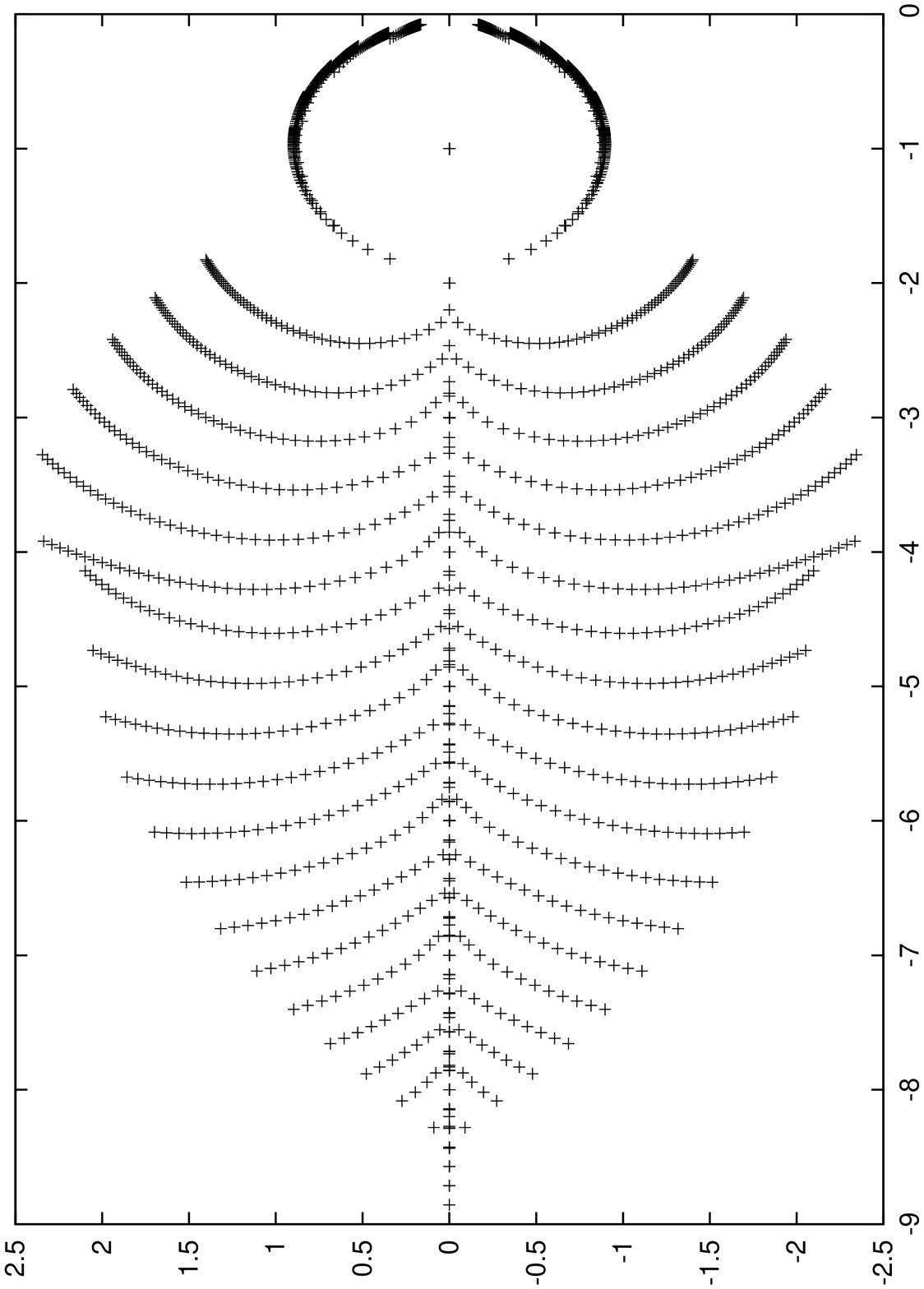}}
\caption{$d=7$}
\label{d7}
\end{center}
\end{minipage}
\end{tabular}
\end{center}
\end{figure}

\begin{figure}[htbp]
\begin{center}
\begin{tabular}{c}
\begin{minipage}{0.5\hsize}
\begin{center}
\rotatebox{-90}{\includegraphics[width=5cm]{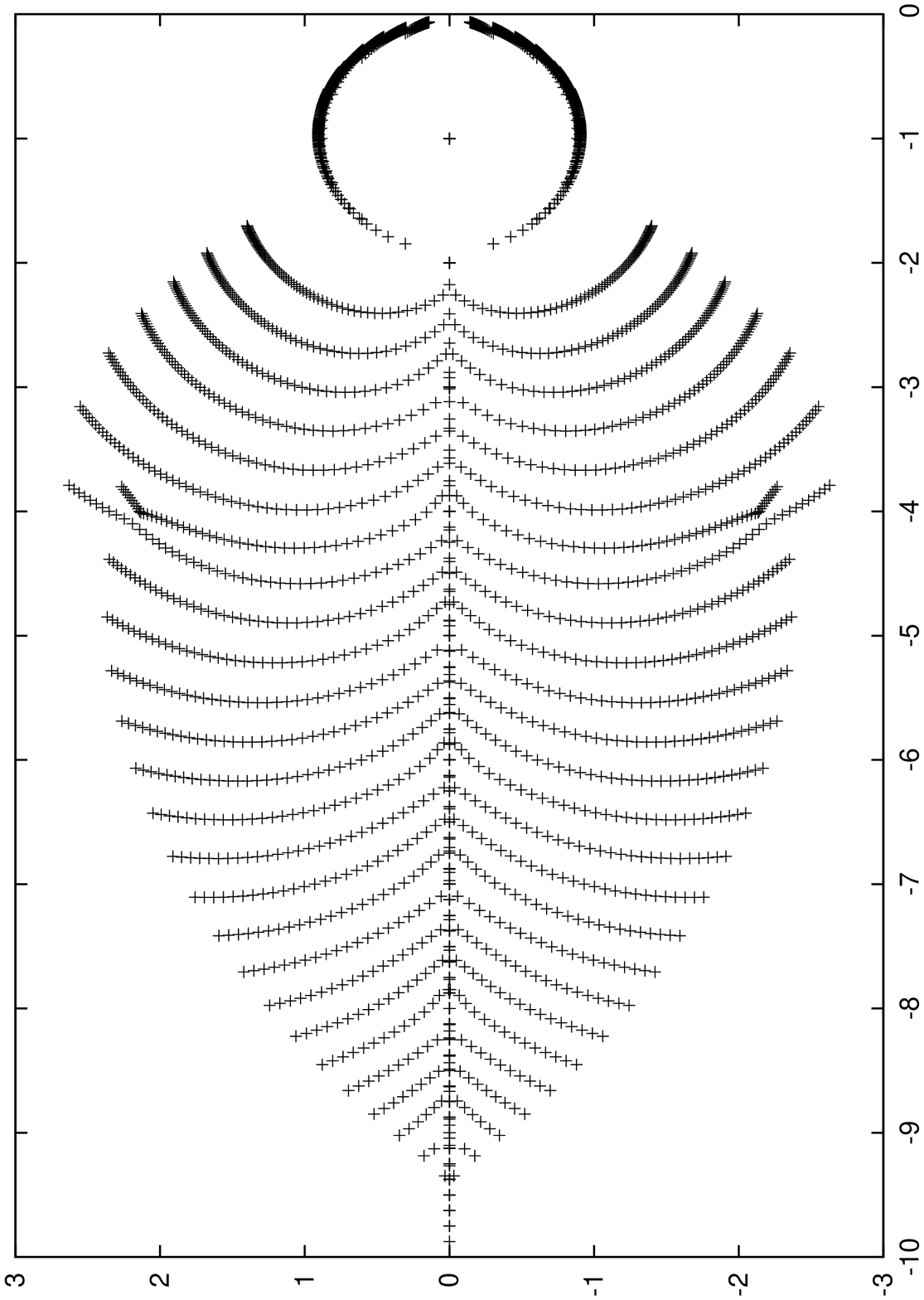}}
\caption{$d=8$}
\label{d8}
\end{center}
\end{minipage}
\begin{minipage}{0.5\hsize}
\begin{center}
\rotatebox{-90}{\includegraphics[width=50mm]{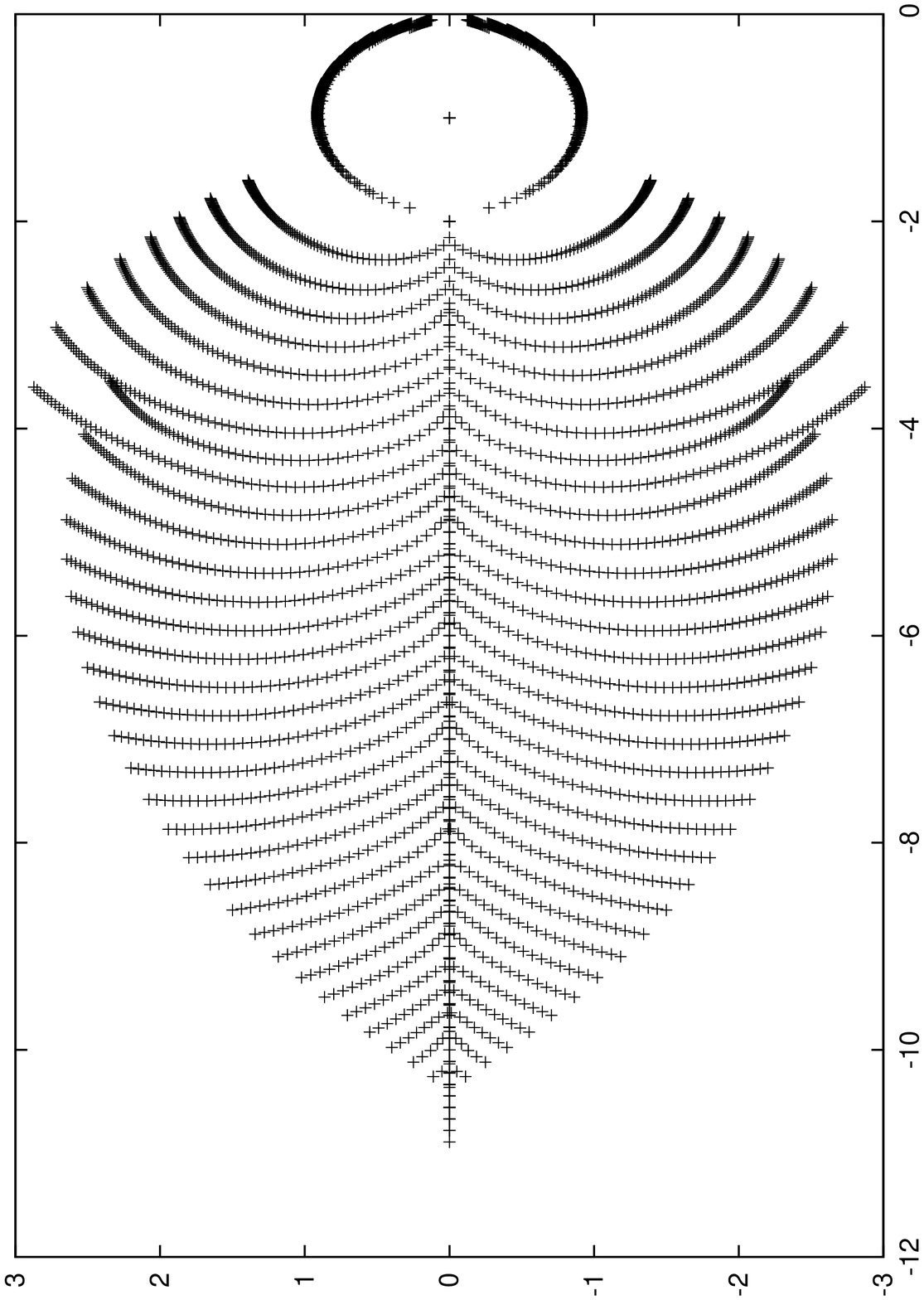}}
\caption{$d=9$}
\label{d9}
\end{center}
\end{minipage}
\end{tabular}
\end{center}
\end{figure}

\begin{figure}[htbp]
\begin{center}
\begin{tabular}{c}
\begin{minipage}{0.5\hsize}
\begin{center}
\rotatebox{-90}{\includegraphics[width=5cm]{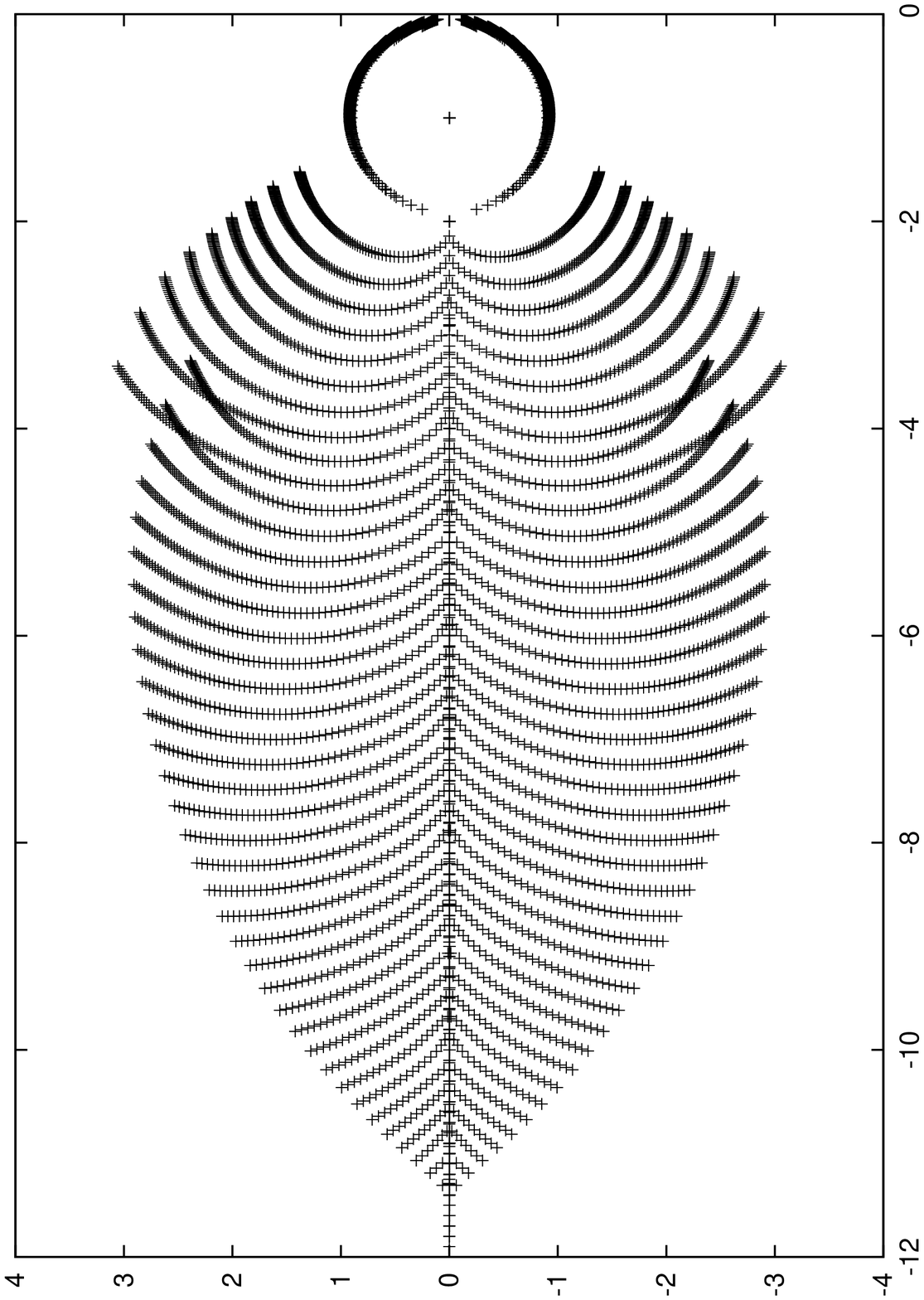}}
\caption{$d=10$}
\label{d10}
\end{center}
\end{minipage}
\begin{minipage}{0.5\hsize}
\begin{center}
\rotatebox{-90}{\includegraphics[width=50mm]{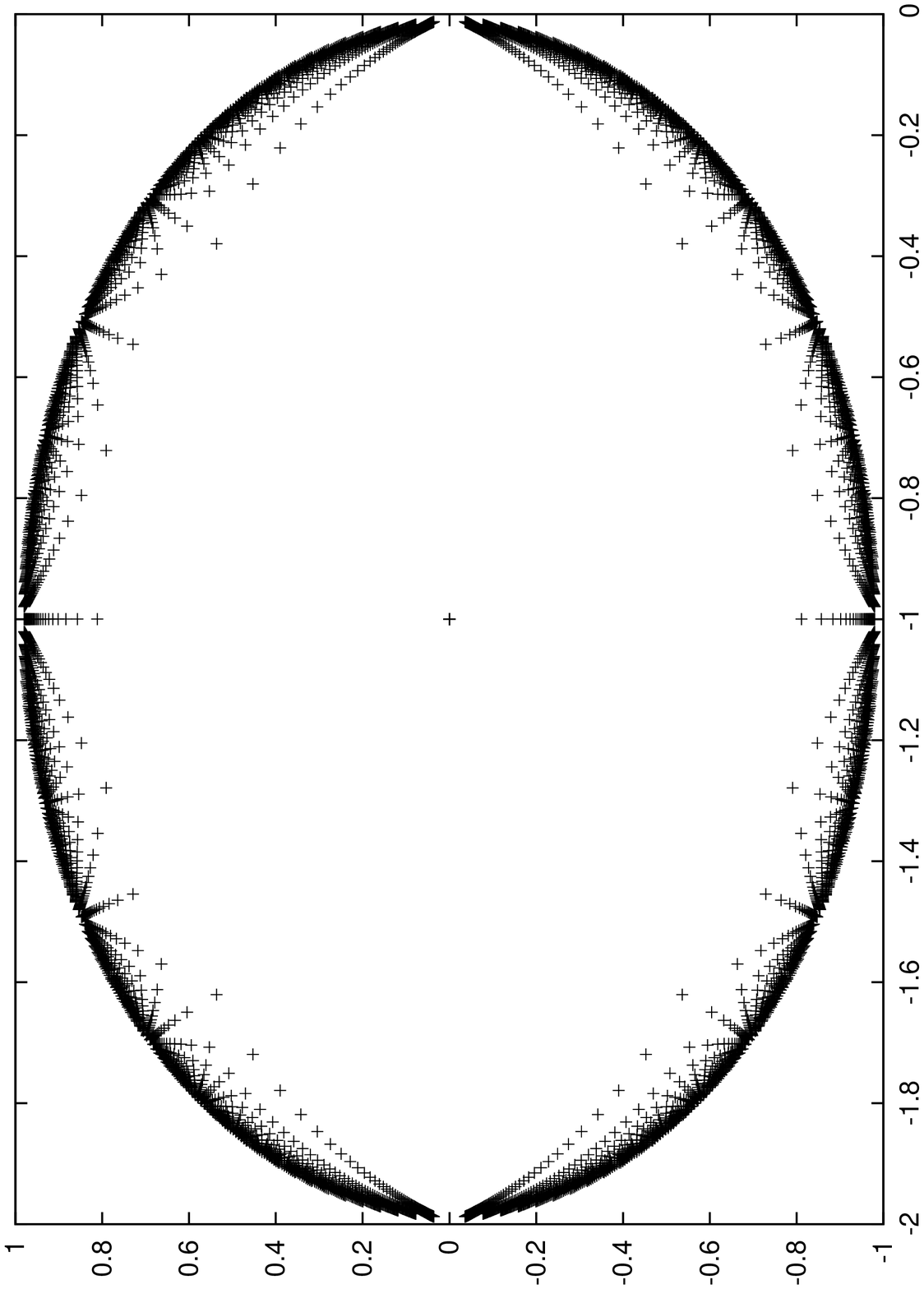}}
\caption{$4 \le d \le 75$ and $n=2d$}
\label{d4_75}
\end{center}
\end{minipage}
\end{tabular}
\end{center}

\end{figure}

\section*{Acknowledgement}

This research was supported by the JST CREST.

\end{document}